\newtheorem{thm}{Theorem}[section]
\newtheorem{prop}[thm]{Proposition}
\newtheorem{lem}[thm]{Lemma}
\newtheorem{cor}[thm]{Corollary}
\newtheorem{conj}[thm]{Conjecture}
\theoremstyle{definition}
\newtheorem{definition}[thm]{Definition}
\newtheorem{definition-proposition}[thm]{Definition-Proposition}
\newtheorem{example}[thm]{Example}
\theoremstyle{remark}
\newtheorem{remark}[thm]{Remark}
\numberwithin{equation}{section}
\newcommand{\bP}{\mathbb{P}}
\newcommand\OO{{\mathcal{O}}}
\newcommand{\bC}{{\mathbb C}}
\newcommand\FF{{\mathcal{F}}}
\newcommand\HH{{\mathcal{H}}}
\newcommand\GG{{\mathcal{G}}}
\newcommand\LL{\mathcal{L}}
\newcommand\CC{{\mathbb{C}}}
  \newcommand\EE{{\mathcal{E}}} 
  \newcommand\RR{{\mathcal{R}}} 
  \newcommand\SSS{{\mathcal{S}}}
 \newcommand{\bF}{{\mathbb F}}
  \newcommand\supp{{\rm{supp}}}
\newcommand\Ext{{\rm{Ext}}}
\newcommand\Hom{{\rm{Hom}}}
 \newcommand\GL{{\rm{GL}}}
 \newcommand\Aut{{\rm{Aut}}}
 \newcommand\RHom{\mathbb{R}{\rm{Hom}}}
\newcommand\Auteq{{\rm{Auteq}}}
 \newcommand\id{{\rm{id}}}
\newcommand\Coh{{\rm{Coh}}}
\begin{document}

\title{Torsion exceptional sheaves on weak del Pezzo surfaces of Type A}
\date{\today}
\author{Pu Cao}
\address{%
Graduate School of Mathematical Sciences, The University of Tokyo
3-8-1 Komaba, Meguro, Tokyo, 153-8914 Japan}
\email{pucao@ms.u-tokyo.ac.jp}
\author{Chen Jiang}
\address{%
Kavli IPMU (WPI), UTIAS, The University of Tokyo, Kashiwa, Chiba 277-8583, Japan.}
\email{chen.jiang@ipmu.jp}
 \thanks{The first author
 was supported by Government Scholarship and the Program for Leading Graduate  Schools, MEXT, Japan. The second author was supported by JSPS KAKENHI Grant Number JP16K17558, the Program for Leading Graduate Schools, and World Premier International Research Center Initiative (WPI), MEXT, Japan}

\begin{abstract}
We investigate torsion exceptional sheaves on a weak del Pezzo surface of degree greater than two whose anticanonical model has at most $A_n$-singularities. We show that every torsion exceptional sheaf can be obtained from a line bundle on a $(-1)$-curve by spherical twists.
\end{abstract} 
\subjclass[2010]{13D09, 14J26}
\keywords{derived categories, exceptional objects, weak del Pezzo surfaces}
\maketitle
\pagestyle{myheadings} \markboth{\hfill  P. Cao \& C. Jiang \hfill}{\hfill Torsion exceptional sheaves on weak del Pezzo surfaces of Type A\hfill}

\section{Introduction}
\noindent We work over the complex number field $\bC$. Let $X$ be a smooth projective variety and $D(X):= D^b(\Coh X)$ the bounded derived category of coherent sheaves on $X$. The category $D(X)$ carries a lot of geometric information on $X$ and has drawn a lot of interest in the study of algebraic varieties. An object $\alpha \in D(X)$ is called \emph{exceptional} if 
$$
\Hom (\alpha,\alpha[i])
\cong
\begin{cases} \bC & i= 0;\\
                                       0         & i \ne 0. 
\end{cases}
$$
Exceptional objects are related to semi-orthogonal decompositions of derived categories and appear in many contexts (see, for example, \cite{Huybrechts}). Hence it is natural to consider the
classification of exceptional objects.

Exceptional objects on {\it del Pezzo surfaces} (i.e., smooth projective surfaces with ample anticanonical bundles) were investigated by Kuleshov and Orlov in \cite{KO} where it is proved that any exceptional object on a del Pezzo surface is isomorphic to a shift of an exceptional vector bundle or a line bundle on a $(-1)$-curve. 

As exceptional objects on del Pezzo surfaces are well-understood, it is natural to consider {\it weak del Pezzo surfaces} (i.e., smooth projective surfaces with nef and big anticanonical bundles). In this case something interesting happens since spherical twist functors (see Definition \ref{def spherical}) are involved due to the existence of $(-2)$-curves on weak del Pezzo surfaces. We could not expect that exceptional objects are as simple as those on del Pezzo surfaces (see Section \ref{section example}), but still we expect that they are so after acting by autoequivalences of the derived category. 
\begin{conj}[{cf. \cite[Conjecture 1.3]{OU}}]\label{conj}Let $X$ be a weak del Pezzo surface.
 For any exceptional object $\EE\in D(X)$, there exists an autoequivalence
$\Phi \in \Auteq(D(X))$ such that $\Phi(\EE)$ is an exceptional vector bundle, or a line bundle on a $(-1)$-curve on $X$.
\end{conj}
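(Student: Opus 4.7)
The plan is to reduce, step by step, the general problem to the classical Kuleshov--Orlov classification on smooth del Pezzo surfaces, exploiting the extra autoequivalences coming from spherical twists along $(-2)$-curves. The argument splits into three layers, and the torsion case is where the real work lies.

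First, I would show that every exceptional object $\EE \in D(X)$ is isomorphic, up to shift, to a coherent sheaf. This is modeled on the Gorodentsev--Rudakov technique: if the cohomology sheaves of $\EE$ sit in more than one degree, then the hypercohomology spectral sequence computing $\Hom(\EE,\EE[\bullet])$ together with Serre duality (which needs only that $-K_X$ is nef and big, not ample) forces a nonzero $\Hom(\EE,\EE[i])$ with $i\neq 0$. After a shift, one may therefore assume $\EE$ is a sheaf. Next, if $\EE$ is torsion-free, I would prove it is locally free by comparing it to its reflexive hull $\EE^{\vee\vee}$: on a smooth surface this hull is locally free and the quotient $Q$ of $\EE\hookrightarrow \EE^{\vee\vee}$ is supported in codimension two. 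A direct computation with the local-to-global Ext spectral sequence shows that nonzero $Q$ produces nonzero $\Ext^{1}(\EE,\EE)$ or $\Ext^{2}(\EE,\EE)$, contradicting exceptionality. Hence $\EE$ is already an exceptional vector bundle and one takes $\Phi=\id$.

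The torsion case is the delicate one. Here Serre duality ($\Ext^{2}(\OO_p,\OO_p)\neq 0$) immediately rules out zero-dimensional support, so $\EE$ is pure of dimension one with support some curve $C$. Hirzebruch--Riemann--Roch applied to $\chi(\EE,\EE)=1$, combined with the adjunction and the numerical constraints imposed by $-K_X$ being nef and big, pins down the Chern character of $\EE$ and the possible position of $C$ inside the dual graph of $(-2)$-curves. The strategy is then to induct on $(-K_X)\cdot c_1(\EE)$: at each stage one would select a $(-2)$-curve $E$ meeting $\Supp\EE$ nontrivially and apply the spherical twist associated to $\OO_E(-1)$ (or its inverse) to strictly decrease this intersection number, until the base case $(-K_X)\cdot c_1(\EE)=1$ is reached, at which point $C$ is a $(-1)$-curve and a direct argument identifies $\EE$ with a line bundle on $C$.

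The main obstacle is precisely the termination of this induction in the torsion case. The paper at hand handles it for anticanonical degree $>2$ with at worst $A_n$-singularities on the anticanonical model, where the spherical subcategory generated by $\{\OO_E(-1)\}$ for $E$ a $(-2)$-curve decomposes along independent type-$A$ chains and the numerical bookkeeping terminates after an explicit case analysis. For degrees $1$ and $2$, or for $D$- and $E$-type configurations of $(-2)$-curves, the situation is genuinely richer: the Seidel--Thomas braid group action on torsion exceptional sheaves becomes considerably harder to control, and the naive invariant $(-K_X)\cdot c_1(\EE)$ is unlikely to decrease monotonically under every useful twist. A proof of Conjecture~\ref{conj} in full generality therefore seems to require additional input — for instance a Bridgeland stability approach that interprets the spherical twists as wall-crossings, or a direct structural description of the spherical subcategory generated by the $(-2)$-curves on $X$ — and, in my view, that is the genuinely open content of the conjecture.
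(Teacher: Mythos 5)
You are trying to prove a statement that the paper itself records only as a conjecture (cf.\ Conjecture 1.3 of Okawa--Uehara); the paper proves just the special case of \emph{torsion exceptional sheaves} on weak del Pezzo surfaces of degree $>2$ of Type A (Theorems \ref{main1} and \ref{main2}), and you correctly sense that the general statement is open. Your overall architecture (reduce to a sheaf, split torsion-free from torsion, handle torsion by twisting along $(-2)$-curves) is reasonable, but two of your steps have concrete problems. First, the reduction of an arbitrary exceptional object to a shift of a sheaf via the Gorodentsev--Rudakov/Kuleshov--Orlov argument uses the ampleness of $-K_X$ to kill $\Hom$'s between cohomology sheaves twisted by $\omega_X$; with $-K_X$ merely nef and big these groups need not vanish on the $(-2)$-curves, and this reduction is not known for weak del Pezzo surfaces --- which is precisely why the paper restricts attention to sheaves from the outset.

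Second, and more seriously, your proposed induction invariant $(-K_X)\cdot c_1(\EE)$ cannot work: by adjunction $K_X\cdot E=0$ for every $(-2)$-curve $E$, so any spherical twist along an object supported on $(-2)$-curves changes $c_1(\EE)$ by a combination of $(-2)$-classes and leaves $(-K_X)\cdot c_1(\EE)$ unchanged (indeed it equals $1$ throughout, by Lemma \ref{lem D in E}). The paper instead writes $c_1(\EE)=D+\sum_{j,i}r^j_iC^j_i$ and inducts on the total multiplicity $\sum_{j,i}r^j_i$, which does strictly decrease. Moreover, twisting by $\OO_E(-1)$ for a single $(-2)$-curve $E$ is not enough to make even that invariant drop while preserving exceptionality: the mechanism is Lemma \ref{lem EES}, which requires exhibiting a surjection $\EE\to\LL$ onto a line bundle $\LL$ on a \emph{chain} of $(-2)$-curves with $c_1(\EE)\cdot c_1(\LL)=-1$ and $h^0(\EE',\LL)=0$, and finding such an $\LL$ is exactly what forces the numerical constraint $f(r_1,\dots,r_n;k)=0$ of Proposition \ref{prop polynomial} and the classification of torsion rigid sheaves on chains in Section \ref{section rigid -2}. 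Notably, the paper's procedure uses only twists (no inverse twists), contrary to what your sketch allows. Your closing assessment --- that degrees $1$, $2$ and the $D$-, $E$-configurations require genuinely new input --- agrees with the paper's own discussion.
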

Recently, Okawa and Uehara \cite{OU} considered the Hirzebruch surface $\bF_2$, the simplest weak del Pezzo surface. They classified exceptional sheaves on $\bF_2$ and confirmed Conjecture \ref{conj} for those sheaves.
Note that on $\bF_2$, there is no torsion exceptional sheaf due to the absence of $(-1)$-curves. Motivated by Okawa--Uehara's work and this observation, we are interested in torsion exceptional sheaves (and objects) on weak del Pezzo surfaces.
In \cite{C}, the first author treated torsion exceptional sheaves and objects on a weak del Pezzo surface of degree greater than one whose anticanonical model has at most $A_1$-singularities.

On the other hand, one may compare Conjecture \ref{conj} to \cite[Proposition 1.6]{Ishii-Uehara}, where Ishii and Uehara showed that a spherical object on the minimal resolution of an $A_n$-singularity on a surface can be obtained from a line bundle on a $(-2)$-curve by autoequivalence. But the situation for torsion exceptional objects seems to be more complicated since its scheme theoretic support might be non-reduced (see Example \ref{example 2}) while the support of such a spherical object is always reduced (see \cite[Corollary 4.10]{Ishii-Uehara}).

In this article, we give an affirmative answer to Conjecture \ref{conj} for torsion exceptional sheaves on weak del Pezzo surfaces of degree greater than two of {\it Type A} (i.e., those whose anticanonical model has at most $A_n$-singularities). Namely, we prove the following theorem.
\begin{thm}\label{main1}
Let $X$ be a weak del Pezzo surface of degree $d>2$ of Type A, and $\EE$ a torsion exceptional sheaf on $X$. Then there exist a $(-1)$-curve $D$, an integer $d$, and a sequence of spherical twist functors $\Phi_1,\ldots, \Phi_n$ associated to line bundles on chains of $(-2)$-curves such that 
$$
\EE\cong \Phi_1 \circ \cdots \circ \Phi_n(\OO_D(d)).
$$
\end{thm}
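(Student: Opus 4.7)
The plan is to induct on a complexity measure associated to the scheme-theoretic support of $\EE$, using spherical twists by line bundles on chains of $(-2)$-curves as the induction step and a line bundle on a $(-1)$-curve as the base case. This parallels the strategy of Ishii--Uehara for spherical objects on $A_n$-resolutions \cite[Proposition 1.6]{Ishii-Uehara}, but must be genuinely adapted, because unlike in the spherical case a torsion exceptional sheaf may have non-reduced scheme-theoretic support (Example \ref{example 2}).

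First I would analyse $\Supp(\EE)$. Since $\EE$ is a torsion sheaf on the smooth surface $X$, its scheme-theoretic support is purely one-dimensional. The exceptional condition gives $\chi(\EE,\EE)=1$, and combining this with the hypotheses that $-K_X$ is nef and big and that $d>2$, a Hirzebruch--Riemann--Roch computation should force the reduced support of $\EE$ to lie in the ``negative part'' of $X$, i.e.\ in the union of $(-1)$- and $(-2)$-curves; any component with $-K_X\cdot C>0$ or non-negative self-intersection would contribute too positively to $\chi$. The condition $\Hom(\EE,\EE)=\bC$ then forces the reduced support to be connected, and under the Type A hypothesis its $(-2)$-components assemble into sub-chains of $A_n$-type.

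Next I would set up an induction on a complexity $\nu(\EE)$ measuring the ``$(-2)$-content'' of the scheme-theoretic support, for instance the sum of the generic stalk lengths of $\EE$ along the $(-2)$-components. The base case $\nu(\EE)=0$ means the support lies on $(-1)$-curves; a further analysis using $\Hom(\EE,\EE)=\bC$ and the exceptional condition should reduce this to support on a single $(-1)$-curve $D\cong\bP^1$, on which $\EE$ must be a line bundle $\OO_D(m)$. For the inductive step I would locate a $(-2)$-chain $Z=\bigcup C_i$ meeting the support and choose a line bundle $L$ on a suitable sub-chain of $Z$ such that $L$ is spherical and $\RHom(L,\EE)$ is concentrated in a single cohomological degree; the spherical twist triangle
\[
\RHom(L,\EE)\otimes L\longrightarrow \EE\longrightarrow T_L(\EE)\longrightarrow
\]
should then produce a torsion exceptional sheaf $T_L(\EE)$ with strictly smaller $\nu$. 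Applying the induction hypothesis to $T_L(\EE)$ and then the inverse twist $T_L^{-1}$ yields the desired expression for $\EE$.

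The hard part will be the inductive step: selecting the correct line bundle $L$ on the correct sub-chain of $Z$ so that the twist strictly reduces $\nu$ and yields an honest sheaf rather than a genuine two-term complex. The non-reduced case is especially delicate, because Ishii--Uehara's analysis for spherical objects crucially exploits reducedness of the support (\cite[Corollary 4.10]{Ishii-Uehara}), whereas here one must simultaneously track the pure-dimensional quotient of $\EE$ and its embedded nilpotent part along the chain. I expect this to require a careful case-by-case analysis of the restrictions of $\EE$ to the components $C_i$, together with Serre duality on $Z$ to compute $\RHom(L,\EE)$.
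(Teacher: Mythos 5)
Your overall skeleton --- induct on the total $(-2)$-multiplicity of the support, with a line bundle on a single $(-1)$-curve as base case --- is the paper's strategy, but the inductive step as you describe it would fail, because your twist points the wrong way. For the line bundles $\LL$ that actually arise here, $\LL$ occurs as a \emph{quotient} $\EE\twoheadrightarrow\LL$ with $c_1(\EE)\cdot c_1(\LL)=-1$, and from the extension $0\to\EE'\to\EE\to\LL\to 0$ (with $h^0(\EE',\LL)=h^2(\LL,\EE')=0$ and $h^1(\LL,\EE')=1$) one computes $\RHom(\LL,\EE)\cong\CC[-2]$. This is indeed concentrated in a single degree, but the resulting cone $T_{\LL}(\EE)$ has cohomology sheaves $\EE$ in degree $0$ and $\LL$ in degree $1$: it is a genuine two-term complex, not a torsion sheaf of smaller complexity, so there is nothing to apply the induction hypothesis to. The correct move (Lemmas \ref{lem EES} and \ref{lemma main2}) is to pass to the kernel $\EE'=\ker(\EE\to\LL)$, use Mukai's lemma to see that $\EE'$ is again exceptional, and observe that the extension itself exhibits $\EE\cong T_{\LL}(\EE')$; the recursion then writes $\EE$ as a composition of \emph{forward} twists applied to $\OO_D(d)$. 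Your assembly (``apply the induction hypothesis to $T_L(\EE)$ and then $T_L^{-1}$'') would, even in a case where the intermediate object happened to be a sheaf, leave an inverse twist in the final expression, which is weaker than the statement being proved.

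The second, larger gap is that the existence of a suitable $\LL$ --- which you defer to ``a careful case-by-case analysis'' --- is where essentially all of the paper's work lies, and nothing in your outline produces it. The paper first uses $c_1(\EE)^2=-\chi(\EE,\EE)=-1$ together with the combinatorial Proposition \ref{prop polynomial} to pin the multiplicity vector along each chain down to a short explicit list (ending at $(1,2,3,3,2,1)$); this step consumes the hypothesis $d>2$, which your argument never invokes, via Lemma \ref{lem no of -2} (at most six $(-2)$-curves, so chains of type at most $A_6$) and Lemma \ref{lem d>1} (the $(-1)$-curve meets each chain in at most one point, fixing the index $k$ in the polynomial $f$). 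It then classifies, in Section \ref{section rigid -2}, all torsion rigid sheaves on chains with first Chern classes $C_1+2C_2$, $C_1+2C_2+3C_3$, $(1,2,3,2,1)$ and $(1,2,3,3,2,1)$, showing each admits a perfect factorization whose last factor is a line bundle $\LL$ on a subchain with $h^0(\EE',\LL)=0$ and $c_1(\EE)\cdot c_1(\LL)=-1$. Serre duality on $Z$ alone will not substitute for this: the choice of subchain and of the degrees of $\LL$ on its components is exactly what the classification determines, and it is also where the non-reduced (multiplicity $\geq 2$) cases you correctly flag as delicate are actually resolved.
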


\begin{remark}
To be more concisely, Theorem \ref{main1} implies that every torsion exceptional sheaf on such $X$ is a line bundle on a $(-1)$-curve up to the action by $\text{Br}(X)$, where $\text{Br}(X)\subset\Auteq(D(X))$ is the group generated by all spherical twists functors. It is worth to mention that every spherical twist in $\text{Br}(X)$ can be decomposed into twists or inverse twists  associated to line bundles on $(-2)$-curves by Ishii and Uehara \cite{Ishii-Uehara}. The point of Theorem \ref{main1} is that we show that the torsion exceptional sheaf can be obtained from a line bundle on a $(-1)$-curve by acting spherical twist functors associated to line bundles on chains of $(-2)$-curves successively which can be constructed explicitly according to the proof, and no inverse spherical twist functors are involved here.
\end{remark}

In fact, we can prove the following slightly general theorem on torsion exceptional sheaves.
\begin{thm}\label{main2}
Let $X$ be a smooth projective surface and $\EE$ a torsion exceptional sheaf on $X$. 
Assume the following conditions hold:
\begin{enumerate}
\item $\supp(\EE)$ only contains  one $(-1)$-curve $D$ and $(-2)$-curves;
\item the restriction of $\EE$ to $D$ is a line bundle (see Definition \ref{def restriction});
\item $(-2)$-curves in $\supp(\EE)$ forms a disjoint union of $A_n$-configurations with $n\leq 6$;
\item the intersection number of $D$ with any chain of $(-2)$-curves in $\supp(\EE)$ is at most one.
\end{enumerate}
Then there exist an integer $d$, and a sequence of spherical twist functors $\Phi_1,\ldots, \Phi_n$ associated to line bundles on chains of $(-2)$-curves such that 
$$
\EE\cong \Phi_1 \circ \cdots \circ \Phi_n(\OO_D(d)).
$$
\end{thm}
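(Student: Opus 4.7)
The plan is to argue by induction on the number $N$ of irreducible $(-2)$-curves appearing in $\supp(\EE)$.

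\textbf{Base case $N=0$.} Condition (1) forces $\supp(\EE)$ to be set-theoretically equal to $D$. The first step is to rule out any thickening, that is, to show the scheme-theoretic support equals $D$ itself. Filtering $\EE$ by powers of the ideal sheaf $\III_D$ produces short exact sequences whose successive quotients are coherent sheaves on $D\cong \PPP^1$; using $\Hom(\EE,\EE)=\bC$ and $\Ext^i(\EE,\EE)=0$ for $i\neq 0$ together with the identification $N_{D/X}=\OO_D(-1)$, I would rule out any nontrivial thickening. Given this, $\EE$ is a coherent sheaf on the smooth rational curve $D$, and condition (2) forces $\EE\cong \OO_D(d)$ for some integer $d$.

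\textbf{Inductive step $N\geq 1$.} The goal is to produce a single spherical twist $\Phi=T_L$ associated to a line bundle $L$ on a chain of $(-2)$-curves contained in $\supp(\EE)$ such that $\EE':=T_L^{-1}(\EE)$ is again a torsion exceptional sheaf satisfying conditions (1)--(4), but with a strictly smaller value of $N$. Concretely, I would pick either a chain $Z=C_1+\cdots+C_k$ inside $\supp(\EE)$ disjoint from $D$, or---if no such chain exists---a chain meeting $D$ in a single point by condition (4), and choose a line bundle $L$ on $Z$ of a carefully designed multidegree so that $L$ is spherical on $X$ (which follows from the $A_k$-configuration structure with $k\leq 6$, cf.\ \cite{Ishii-Uehara}), and so that the evaluation triangle
\[
\RHom(L,\EE)\ot L \to \EE \to T_L(\EE)
\]
has the effect of stripping off at least one component of $Z$ from $\supp(T_L^{-1}(\EE))$. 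Iterating this reduction at most $N$ times lands us in the base case, and reading off the twists in reverse order produces the desired decomposition $\EE\cong \Phi_1\circ\cdots\circ \Phi_n(\OO_D(d))$.

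The main obstacle is the inductive step: one must explicitly pin down the multidegree of $L$ on $Z$ and control the cohomology sheaves of the cone so that $\EE'$ is again a genuine coherent sheaf (not a complex with nonzero higher cohomologies), is still exceptional, and satisfies all four hypotheses with strictly smaller $N$. Unlike the spherical-object analysis of \cite{Ishii-Uehara}, the scheme-theoretic support of a torsion exceptional sheaf may be non-reduced along the $(-2)$-curves, so one has to carefully track the thickening structure through each twist and verify that the restriction to $D$ remains a line bundle at every stage; this local analysis on the formal neighborhood of each chain is the heart of the argument.
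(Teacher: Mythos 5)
Your overall strategy---strip spherical line bundles supported on chains of $(-2)$-curves off $\EE$ one twist at a time until only a line bundle on $D$ remains---matches the paper's, but your inductive step has two genuine gaps. The first is that your induction parameter is wrong: you induct on the number $N$ of irreducible $(-2)$-curves in $\supp(\EE)$, but a single spherical twist need not remove any irreducible component of the support; it may only lower a multiplicity. Example \ref{example 2} of the paper is exactly such a case: the first factorization step replaces $\OO_{D\cup C_1\cup 2C_2\cup C_3}$ by $\OO_{D\cup C_1\cup C_2\cup C_3}(-1,-1,2,-1)$, whose reduced support still contains all three $(-2)$-curves, so your $N$ does not decrease at that step. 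The paper instead inducts on the total multiplicity $\sum_{j,i}r^j_i$ of the $(-2)$-curves in $c_1(\EE)$, which does drop by a positive amount after each twist.

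The second and more serious gap is that all of the actual content of the proof is concealed in your phrase ``choose a line bundle $L$ on $Z$ of a carefully designed multidegree so that \dots\ the evaluation triangle has the effect of stripping off at least one component'': you give no mechanism for producing such an $L$, and you flag but do not resolve the problem of showing that $T_L^{-1}(\EE)$ is a sheaf rather than a complex with several nonzero cohomologies. The paper supplies both missing pieces. The identity $c_1(\EE)^2=-\chi(\EE,\EE)=-1$, combined with the inequality of Proposition \ref{prop polynomial}, restricts the multiplicity vector of each chain to six explicit shapes; the classification of torsion rigid sheaves on chains of $(-2)$-curves (Propositions \ref{prop 12} and \ref{prop 123}, Corollaries \ref{cor 12321} and \ref{cor 123321}) then produces, in each case, a short exact sequence $0\to\EE'\to\EE\to\LL\to 0$ with $\LL$ a line bundle on a subchain, $h^0(\EE',\LL)=0$ and $\chi(\LL,\EE')=-1$; this is the content of Lemma \ref{lemma main2}. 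Mukai's lemma in the form of Lemma \ref{lem EES} then shows simultaneously that $\EE'$ is exceptional and that $\EE\cong T_{\LL}(\EE')$. Since $\EE'$ is constructed directly as a subsheaf of $\EE$ rather than as an inverse twist, the sheaf-versus-complex issue you identify as the main obstacle never arises. Without the numerical constraint and the classification, your inductive step cannot be carried out.
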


The idea of the proof is based on the observation that, under some good conditions, we can ``factor" a spherical sheaf out of a torsion exceptional sheaf to get another one (see Lemmas \ref{lem EES} and \ref{lemma main2}), and this step actually corresponds to a spherical twist functor. After this factorization, we get an exceptional sheaf with smaller support. Iterating this process,  eventually we get an exceptional sheaf supported on a $(-1)$-curve. To check the conditions that allow us to factor out spherical sheaves, we need a detailed classification of certain torsion rigid sheaves supported on $(-2)$-curves (see Section \ref{section rigid -2}). 

We expect that this idea also works for torsion exceptional sheaves on arbitrary weak del Pezzo surfaces. At least, Lemma \ref{lem EES} is very useful for cutting down the support of the torsion exceptional sheaf. However, to find an appropriate way to apply Lemma \ref{lem EES}, our proof is based on the classification of torsion rigid sheaves supported on $(-2)$-curves (see Section \ref{section rigid -2}). If the configuration of the support gets complicated, then the classification becomes tedious. 
So we hope that one could replace the latter part by more systematical method which can avoid tedious classifications and works for arbitrary weak del Pezzo surfaces.

\medskip
\noindent{\bf Notation and Conventions.} 
Let $X$ be a smooth projective surface. For a coherent sheaf $\EE$ on $X$, we denote by $\supp(\EE)$ 
the support of $\EE$ with reduced induced scheme structure. 
For $\EE, \FF\in D(X)$, we denote $h^i(\EE, \FF):=\dim \Ext^i(\EE, \FF)=\dim \Hom(\EE, \FF[i]),$
and the Euler characteristic $\chi(\EE, \FF):=\sum_{i}(-1)^ih^i(\EE, \FF).$
A $(-1)$-curve (resp. $(-2)$-curve) is a smooth rational curve on $X$ with self-intersection number $-1$ (resp. $-2$).
We say $Z=C_1\cup\cdots\cup C_n$ is a chain of $(-2)$-curves on $X$ if $C_i$ is a $(-2)$-curve and 
$$
C_i\cdot C_j=\begin{cases}
    1 &|i-j|=1;\\
    0 &|i-j|>1.
   \end{cases}
$$
We regard $Z$ as a closed subscheme of $X$ with respect to the reduced induced structure. Sometimes we also regard $Z$ as its fundamental cycle $\sum_iC_i$. For a coherent sheaf $\RR$ on $Z$, we denote by $\deg_{C_l} \RR$ the degree of the restriction $\RR|_{C_l}$ on $C_l\cong \bP^1$. We denote by $\RR_0 =\OO_{C_1\cup \cdots \cup C_n}(a_1,\ldots,a_n)$
the line bundle on $Z$ such that $\deg_{C_l} \RR_0 = a_l$ for all $l$. Sometimes we also consider 
$
\RR_1 =\OO_{r_1C_1\cup \cdots \cup r_nC_n}(a_1,\ldots,a_n)
$
for $r_l\in \{1,2\}$ for all $l$. Here $\RR_1$ is the line bundle on $r_1C_1\cup \cdots \cup r_nC_n$ such that $\deg_{C_l} \RR_1 = a_l$ for all $l$. In other words, $\RR_1|_{r_lC_l}\cong \OO_{r_lC_l}(a_l)$, where $\OO_{2C}(a)$ is the unique non-trivial extension of $\OO_{C}(a)$ by $\OO_{C}(a+2)$ for a $(-2)$-curve $C$ on $X$.

\section{Preliminaries}

\subsection{Exceptional and spherical objects}
We recall the definition of exceptional and spherical objects.
\begin{definition}
Let
$X$ be a smooth projective variety. 
An object $\alpha \in D(X)$ is \emph{exceptional} if 
$$
\Hom (\alpha,\alpha[i])
\cong
\begin{cases} \bC & i= 0;\\
                                       0         & i \ne 0. 
\end{cases}
$$
\end{definition}

\begin{example}
\begin{enumerate}
\item Let $X$ be a smooth projective variety with $H^i(X, \OO_X)=0$ for $i>0$ (e.g. Fano manifolds). Then every line bundle on $X$ is an exceptional object.
\item Let $X$ be a smooth projective surface and $C$ a $(-1)$-curve on $X$.  Then any line bundle on $C$ is an exceptional object.
\end{enumerate}
\end{example}
\begin{definition}[\cite{ST}]\label{def spherical}
Let
$X$ be a smooth projective variety. 

\begin{enumerate} 
\item An object $\alpha \in D(X)$ is \emph{spherical} if 
 $\alpha \otimes \omega_{X} \cong \alpha$ and
$$
\Hom (\alpha,\alpha[i])
\cong
\begin{cases}  
 \bC & i=0,\dim X;\\
0 & i\ne 0,\dim X.                     
\end{cases}
$$

\item Let $\alpha\in D(X)$ be a spherical object. 
Consider the mapping cone 
$$
\mathcal{C}={\rm Cone}(\pi_1^*\alpha^\vee\otimes \pi_2^*\alpha \to \mathcal{O}_{\Delta})
$$
of the natural evaluation
$
\pi_1^*\alpha^\vee\otimes \pi_2^*\alpha \to \mathcal{O}_{\Delta}
$,
where
$
\Delta \subset X \times X
$
is the diagonal and $\pi_i$ is the projection from
$X\times X$ to the $i$-th factor. Then the integral functor 
$T_{\alpha}:=\Phi^{\mathcal{C}}_{X\to X}$ defines an autoequivalence of $D(X)$, 
called the \emph{twist functor} associated to the spherical object $\alpha$. 
By definition, for $\beta\in D(X)$, there is an exact triangle
$$
\RHom(\alpha,\beta)\otimes
\alpha\xrightarrow[]{\textrm{evaluation}}
\beta \to
T_{\alpha}\beta.
$$
\end{enumerate}
%
%
%
\end{definition}
\begin{example}[cf. {\cite[Example 4.7]{Ishii-Uehara}}]
Let $X$ be a smooth projective surface and $Z$ a chain of $(-2)$-curves. Then any line bundle on $Z$ is a spherical object in $D(X)$.
\end{example}

\subsection{Rigid sheaves}In this subsection, we assume that $X$ is a  smooth projective surface. All sheaves are considered to be coherent on $X.$

A coherent sheaf $\RR$ is said to be {\it rigid} if $h^1(\RR, \RR) = 0$.

Kuleshov \cite{Kuleshov} systematically investigated rigid sheaves on surfaces with
  anticanonical class without base components. We collect some interesting properties for rigid sheaves in this subsection for applications. We will use the following easy lemma without mention.
\begin{lem}\label{lem ext non-trivial} Consider an extension of coherent sheaves
$$0 \to \GG_2  \to \RR \to \GG_1 \to 0$$
such that $\RR$ is rigid. Then $h^1(\GG_1, \GG_2)>0$ if and only if this extension is non-trivial.
\end{lem}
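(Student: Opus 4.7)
The plan is to verify the equivalence by proving the two implications separately; rigidity of $\RR$ enters in only one of them, while the other is essentially a restatement of the definition of $\Ext^1$.

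For the implication that a non-trivial extension forces $h^1(\GG_1,\GG_2)>0$, I would simply observe that the class of the given extension $0\to\GG_2\to\RR\to\GG_1\to 0$ is tautologically an element of $\Ext^1(\GG_1,\GG_2)$, and it is non-zero precisely when the sequence does not split. Hence non-triviality immediately yields $\dim\Ext^1(\GG_1,\GG_2)\geq 1$, and no use of rigidity is required.

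For the converse I would argue by contrapositive. Suppose the extension splits, so $\RR\cong\GG_1\oplus\GG_2$. Then by biadditivity of $\Ext$,
$$\Ext^1(\RR,\RR)\cong\bigoplus_{i,j\in\{1,2\}}\Ext^1(\GG_i,\GG_j),$$
and the rigidity hypothesis $h^1(\RR,\RR)=0$ forces every summand to vanish. In particular $\Ext^1(\GG_1,\GG_2)=0$, which contradicts $h^1(\GG_1,\GG_2)>0$. Alternatively, the same conclusion can be read off from the long exact sequence obtained by applying $\Hom(\GG_1,-)$ to the extension, using that $\id_{\GG_1}\in\Hom(\GG_1,\GG_1)$ maps to the extension class in $\Ext^1(\GG_1,\GG_2)$.

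There is essentially no obstacle here; the statement functions as a convenient bookkeeping device later in the paper, letting one certify that various extensions built out of rigid pieces really are non-split (and therefore genuinely encode the structure of $\RR$ rather than trivially decomposing it). The only minor subtlety to keep in mind is that rigidity of a direct sum passes to each $\Ext^1$-summand separately, which is what makes the contrapositive work.
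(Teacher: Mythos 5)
Your proof is correct and follows essentially the same route as the paper: the forward direction is the tautological observation that a non-split extension gives a non-zero class in $\Ext^1(\GG_1,\GG_2)$, and the converse is the same contradiction/contrapositive via $\RR\cong\GG_1\oplus\GG_2$ forcing $h^1(\RR,\RR)\geq h^1(\GG_1,\GG_2)$. No issues.
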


\begin{proof}
The `if' part is trivial. For the `only if' part, assume that $h^1(\GG_1, \GG_2)>0$ and this extension is trivial, then 
$\RR\cong \GG_1\oplus \GG_2,$
which implies that 
$
h^1(\RR, \RR)\geq h^1(\GG_1, \GG_2)>0,
$
a contradiction.
\end{proof}
We have the following Mukai's lemma for rigid sheaves.
\begin{lem}[Mukai's lemma, {\cite[2.2 Lemma]{KO}}]\label{lem GEG}For each exact sequence
$$0 \to \GG_2  \to \RR \to \GG_1 \to 0$$
of coherent sheaves such that $$h^1(\RR,\RR)=h^0(\GG_2, \GG_1) = h^2 (\GG_1, \GG_2) = 0,$$
the following hold:
\begin{enumerate}
\item $h^1(\GG_1, \GG_1)=h^1(\GG_2, \GG_2)=0$;
\item $h^0(\RR, \RR) = h^0(\GG_1, \GG_1) + h^0(\GG_2, \GG_2) +\chi(\GG_1, \GG_2)$;
 \item $h^2(\RR, \RR) = h^2(\GG_1, \GG_1) + h^2(\GG_2, \GG_2) +\chi(\GG_2, \GG_1)$;
\item $h^1(\GG_1, \GG_2)\leq h^0(\GG_1, \GG_1) +h^0(\GG_2, \GG_2)-1$.
\end{enumerate}
\end{lem}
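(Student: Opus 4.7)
The plan is to apply the functors $\Hom(-,A)$ and $\Hom(A,-)$ with $A \in \{\GG_1,\GG_2,\RR\}$ to the short exact sequence and chase the resulting long exact sequences, exploiting the crucial identity $p \circ i_* = 0$, where $i_*\colon \GG_2\hookrightarrow \RR$ and $p\colon \RR\twoheadrightarrow \GG_1$ are the given maps (this is just the vanishing of the composition of two consecutive arrows in the distinguished triangle $\GG_2\to \RR\to \GG_1\xrightarrow{\delta}\GG_2[1]$).

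For (1), I would prove $h^1(\GG_2,\GG_2)=0$ by constructing the chain
$$
\Ext^1(\GG_2,\GG_2) \hookrightarrow \Ext^1(\GG_2,\RR) \hookrightarrow \Ext^2(\GG_1,\RR) \cong \Ext^2(\GG_1,\GG_1).
$$
The first injection (from $\Hom(\GG_2,-)$) has kernel the image of $\Hom(\GG_2,\GG_1)=0$; the second (from $\Hom(-,\RR)$) has kernel the image of $\Ext^1(\RR,\RR)=0$; the isomorphism (from $\Hom(\GG_1,-)$) uses $\Ext^2(\GG_1,\GG_2)=0$ and $\Ext^3=0$ on a surface. Tracing the arrows, the composition sends $\alpha\colon\GG_2\to\GG_2[1]$ to $p[2]\circ i_*[2]\circ\alpha[1]\circ\delta$, which vanishes because $p\circ i_*=0$. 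Hence $\Ext^1(\GG_2,\GG_2)=0$; the symmetric chain
$$
\Ext^1(\GG_1,\GG_1) \hookrightarrow \Ext^1(\RR,\GG_1) \hookrightarrow \Ext^2(\RR,\GG_2) \cong \Ext^2(\GG_2,\GG_2)
$$
gives $\Ext^1(\GG_1,\GG_1)=0$ in exactly the same way.

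For (2), with (1) in hand the long exact sequences from $\Hom(\RR,-)$, $\Hom(\GG_1,-)$ and $\Hom(\GG_2,-)$ collapse to yield
$$
h^0(\RR,\RR)=h^0(\GG_1,\GG_1)+h^0(\GG_2,\GG_2)+h^0(\GG_1,\GG_2)-h^1(\GG_1,\GG_2);
$$
since $h^2(\GG_1,\GG_2)=0$, the right-hand side is $h^0(\GG_1,\GG_1)+h^0(\GG_2,\GG_2)+\chi(\GG_1,\GG_2)$. For (3), the additivity identity $\chi(\RR,\RR)=\sum_{i,j}\chi(\GG_i,\GG_j)$, together with the vanishing of $h^1$ for $(\RR,\RR)$ and for $(\GG_i,\GG_i)$ (by (1)), makes (3) equivalent to (2). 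For (4), the same formula rewrites as $h^1(\GG_1,\GG_2)=h^0(\GG_1,\GG_1)+h^0(\GG_2,\GG_2)+h^0(\GG_1,\GG_2)-h^0(\RR,\RR)$, so it suffices to show $h^0(\RR,\RR)\geq h^0(\GG_1,\GG_2)+1$. The assignment $f\mapsto i_*\circ f\circ p$ embeds $\Hom(\GG_1,\GG_2)$ into $\Hom(\RR,\RR)$, and its image misses $\id_\RR$: otherwise $p=p\circ\id_\RR=(p\circ i_*)\circ f\circ p=0$, contradicting surjectivity of $p$ (the trivial cases $\GG_1=0$ or $\GG_2=0$ are immediate).

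The main obstacle is the derived-category diagram chase underpinning (1): one must correctly identify the connecting map $\Ext^1(\GG_2,\RR)\to\Ext^2(\GG_1,\RR)$ as precomposition with $\delta$, and the isomorphism $\Ext^2(\GG_1,\RR)\cong\Ext^2(\GG_1,\GG_1)$ as postcomposition with $p$, so that the whole composition telescopes through $p\circ i_*=0$. Once (1) is established, parts (2)--(4) reduce to bookkeeping with long exact sequences, with the single additional observation in (4) that $\id_\RR$ cannot be written in the form $i_*\circ f\circ p$.
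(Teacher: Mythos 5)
Your proof is correct, but it takes a genuinely different route from the paper. The paper does not prove (1)--(3) at all: it cites them directly from Kuleshov--Orlov, and for (4) it extracts from the interior of their proof the surjectivity of the natural map $\Hom(\GG_1,\GG_1)\oplus\Hom(\GG_2,\GG_2)\xrightarrow{d_1}\Ext^1(\GG_1,\GG_2)$, observes that $(\id_{\GG_1},\id_{\GG_2})$ lies in the kernel, and compares dimensions. You instead give a self-contained argument: your telescoping chain $\Ext^1(\GG_2,\GG_2)\hookrightarrow\Ext^1(\GG_2,\RR)\hookrightarrow\Ext^2(\GG_1,\RR)\to\Ext^2(\GG_1,\GG_1)$, whose composite factors through $p\circ i_*=0$, is essentially Mukai's original proof of (1) (note you only need injectivity of the last arrow, so the $\Ext^3=0$ remark is superfluous); (2) and (3) then follow from the collapsed long exact sequences and additivity of $\chi$ exactly as you say; and your derivation of (4) from the identity in (2) together with the injection $f\mapsto i_*\circ f\circ p$ of $\Hom(\GG_1,\GG_2)$ into $\Hom(\RR,\RR)$ missing $\id_\RR$ is a valid (and arguably cleaner) replacement for the paper's appeal to the surjectivity of $d_1$ --- the two arguments encode the same exact sequence read from opposite ends. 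What your approach buys is independence from the reference; what the paper's buys is brevity. The only caveat is the implicit assumption $\GG_1,\GG_2\neq 0$ in (4), which you correctly flag as a trivial edge case and which is equally implicit in the paper's statement.
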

\begin{proof}
(1)--(3) are from \cite[2.2 Lemma]{KO}. We prove (4) here. In the proof of \cite[2.2 Lemma]{KO}, we know that the natural map
$$
\Hom(\GG_1, \GG_1) \oplus \Hom(\GG_2, \GG_2) \xrightarrow{d_1} \Ext^1(\GG_1, \GG_2)
$$
is surjective. Note that the image of $(\id_{\GG_1}, \id_{\GG_2})$ is zero. Hence we get the inequality by comparing the dimensions.
\end{proof}

\begin{lem} \label{lem EES} Consider an exact sequence of coherent sheaves
$$0 \to \EE'  \to \EE \to \SSS \to 0,$$
where $\EE$ is rigid, $\SSS$ is spherical, $h^0(\EE', \SSS) =0$, and $\chi(\SSS, \EE')=-1$. 
Then $h^i(\EE', \EE')=h^i(\EE, \EE)$ for $i=0,1,2$. In particular, $\EE$ is exceptional if and only if so is $\EE'$, and in this case, $\EE\cong T_\SSS( \EE')$.
\end{lem}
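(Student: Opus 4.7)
The plan is to apply Mukai's Lemma \ref{lem GEG} to the given exact sequence with $\GG_1=\SSS$, $\GG_2=\EE'$ and $\RR=\EE$. Two of the three hypotheses are handed to us ($h^1(\EE,\EE)=0$ since $\EE$ is rigid, and $h^0(\EE',\SSS)=0$ by assumption), so the first task is to verify the remaining one, $h^2(\SSS,\EE')=0$. I would get this from Serre duality together with the spherical identity $\SSS\otimes\omega_X\cong\SSS$: namely $\Ext^2(\SSS,\EE')\cong\Hom(\EE',\SSS\otimes\omega_X)^\vee\cong\Hom(\EE',\SSS)^\vee=0$.

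With Mukai's lemma in hand, part (1) gives $h^1(\EE',\EE')=0$ directly. For the $h^0$- and $h^2$-equalities, I would plug into parts (2) and (3), using $h^i(\SSS,\SSS)=1$ for $i=0,2$. The $h^0$-equality is then immediate from the hypothesis $\chi(\SSS,\EE')=-1$. For $h^2$, I need the ``mirror'' identity $\chi(\EE',\SSS)=-1$; I would obtain this via the two standard relations $\chi(F,G)=\chi(G,F\otimes\omega_X)$ (Serre duality) and $\chi(G,F\otimes\omega_X)=\chi(G\otimes\omega_X^{-1},F)$, applied with $\SSS\otimes\omega_X\cong\SSS$. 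The biconditional ``$\EE$ exceptional $\iff\EE'$ exceptional'' then follows at once, since on a surface being exceptional simply means $h^0=1$ and $h^1=h^2=0$.

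For the identification $\EE\cong T_\SSS(\EE')$, I would assume $\EE$ (equivalently $\EE'$) is exceptional and first pin down $\RHom(\SSS,\EE')$. We already have $h^2(\SSS,\EE')=0$ and $\chi(\SSS,\EE')=-1$, which forces $h^1(\SSS,\EE')=h^0(\SSS,\EE')+1\geq 1$; on the other hand, Mukai (4) combined with $h^0(\EE',\EE')=1=h^0(\SSS,\SSS)$ gives $h^1(\SSS,\EE')\leq 1$, so $h^0(\SSS,\EE')=0$ and $h^1(\SSS,\EE')=1$. Thus $\RHom(\SSS,\EE')\cong\CC[-1]$, and the defining triangle of the twist becomes $\SSS[-1]\to\EE'\to T_\SSS(\EE')$, while the given short exact sequence rotates to $\SSS[-1]\to\EE'\to\EE$. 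Both maps $\SSS[-1]\to\EE'$ are non-zero elements of the one-dimensional space $\Ext^1(\SSS,\EE')$, hence scalar multiples of each other, and so their cones are isomorphic, yielding $\EE\cong T_\SSS(\EE')$.

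The main obstacle is this last step: one has to identify the evaluation map appearing in the twist with a non-zero scalar multiple of the extension class of the given short exact sequence. That requires a full computation of $\Ext^\bullet(\SSS,\EE')$ rather than just its Euler characteristic, which is why Mukai (4) is crucial here; once the one-dimensionality of $\Ext^1(\SSS,\EE')$ is in place, uniqueness of the non-trivial extension finishes the job.
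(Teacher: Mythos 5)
Your proposal is correct and follows essentially the same route as the paper: verify the hypotheses of Mukai's Lemma \ref{lem GEG} with $\GG_1=\SSS$, $\GG_2=\EE'$ using Serre duality and $\SSS\otimes\omega_X\cong\SSS$, read off the equalities $h^i(\EE',\EE')=h^i(\EE,\EE)$ from parts (1)--(3), and then use part (4) together with $\chi(\SSS,\EE')=-1$ to pin down $\RHom(\SSS,\EE')\cong\CC[-1]$ and match the twist triangle with the given extension. Your final step is in fact spelled out slightly more carefully than the paper's (identifying both connecting maps as nonzero classes in the one-dimensional $\Ext^1(\SSS,\EE')$, with non-splitness of the given sequence guaranteed by rigidity of $\EE$ as in Lemma \ref{lem ext non-trivial}).
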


\begin{proof}
Since $\SSS$ is spherical, $h^2(\SSS, \EE') =0$ and $\chi(\EE', \SSS)=-1$ by Serre duality. By Lemma \ref{lem GEG}, $h^i(\EE', \EE')=h^i(\EE, \EE)$ for $i=0,1,2$. In particular, $\EE$ is exceptional if and only if so is $\EE'$. 

Suppose that $\EE$ and $\EE'$ are exceptional, then by Lemma \ref{lem GEG}(4), 
$$h^1(\SSS,\EE')\leq h^0(\SSS, \SSS) +h^0(\EE', \EE')-1=1.$$
Since $\chi(\SSS,\EE')=-1$, we have $h^1(\SSS,\EE')=1$ and $h^0(\SSS, \EE')=h^2(\SSS, \EE') =0$.
By definition of twist functor, we have a distinguished triangle
$$
\SSS[-1] \to \EE' \to T_{\SSS}(\EE'),
$$
which corresponds to the exact sequence
$$0 \to \EE'  \to \EE \to \SSS \to 0.$$
Hence  $\EE\cong T_\SSS (\EE')$.
\end{proof}


We can say more about torsion rigid sheaves. 
\begin{remark}\label{remark restriction}
Let $\RR$ be a torsion rigid sheaf, then $\RR$ is pure one-dimensional by \cite[Corollary 2.2.3]{Kuleshov}. Suppose that  
$\supp(\RR)=Z\cup Z'$ where $Z$ and $Z'$ are unions of curves with no common components. Then there exists an exact sequence
$$
0\to \RR' \to \RR\to \RR_{Z} \to 0
$$
where $\RR'=\HH^0_{Z'}(\RR)$ is the subsheaf with supports (see \cite[II, Ex. 1.20]{H}) in $Z'$ and $\RR_{Z}$ is the quotient sheaf.
Then $\supp(\RR')=Z'$,  $\supp(\RR_{Z})=Z$, and $h^0(\RR', \RR_{Z})=h^2(\RR_{Z}, \RR')=0$ by the support condition. By Lemma \ref{lem GEG},  $\RR'$ and $\RR_{Z}$ are torsion rigid (pure one-dimensional) sheaves. Moreover, if we write $Z=\cup_{i}C_i$ and $Z'=\cup_j C'_j$, then we can write the first Chern class of $\RR$ uniquely as $$c_1(\RR)=\sum_ir_iC_i+\sum_j s_jC'_j$$
in the sense that $c_1(\RR_{Z})=\sum_ir_iC_i$ and $c_1(\RR')=\sum_j s_jC'_j$ for some positive integers $r_i$ and $s_j$. 
\end{remark}

\begin{definition}[restriction to curves]\label{def restriction}
Let $\RR$ be a torsion rigid sheaf such that
$\supp(\RR)=Z\cup Z'$ where $Z$ and $Z'$ are unions of curves with no common components. 
The sheaf $\RR_{Z}$ constructed as in Remark \ref{remark restriction} is called {\it the restriction of $\RR$ to $Z$}.
\end{definition}

Note that for an irreducible curve $C\subset \supp(\RR)$, the restriction $\RR_C$ of $\RR$ to $C$ is sometimes different from the restriction $\RR|_C:=\RR \otimes \OO_C$. For example, if $\RR=\OO_{2C}$, then $\RR_C=\OO_{2C}$ while $\RR|_C=\OO_C$.

Here we remark that for a chain of $(-2)$-curves $Z=C_1\cup\cdots\cup C_n$ and a torsion sheaf $\RR$, $\supp(\RR)\subset Z$ if and only if $c_1({\RR})$ can be written as $\sum_ir_iC_i$ for non-negative integers $r_i$. In this case, $r_i$ is uniquely determined for all $i$.

\begin{lem}\label{lem supp}
Let $\RR$ be a torsion rigid sheaf on $X$, then any irreducible component of $\supp(\RR)$ is a curve with negative self-intersection.
\end{lem}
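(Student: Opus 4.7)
The plan is to reduce to the case where the support is a single irreducible curve, and then extract the sign of its self-intersection from a Riemann--Roch computation combined with the rigidity hypothesis.

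First, let $C$ be an irreducible component of $\supp(\RR)$ and let $Z'$ be the union of the remaining irreducible components. Since $\RR$ is torsion rigid, Remark \ref{remark restriction} applies (with $Z = C$): we obtain a quotient $\RR_C$ of $\RR$, which is itself torsion rigid, whose support is exactly $C$, and whose first Chern class has the form $c_1(\RR_C) = rC$ for some positive integer $r$. Hence it suffices to prove the statement in the case where $\supp(\RR) = C$ is irreducible and $c_1(\RR) = rC$ with $r \geq 1$.

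Next, I would apply Hirzebruch--Riemann--Roch to the pair $(\RR, \RR)$ of rank-zero sheaves on the smooth surface $X$. Since both arguments have rank zero, all contributions involving the rank drop out, and the only surviving term is
\begin{equation*}
\chi(\RR, \RR) = -c_1(\RR)^2 = -r^2 C^2.
\end{equation*}
On the other hand, $\RR$ is rigid, so $h^1(\RR,\RR) = 0$; and $\RR \ne 0$, so $h^0(\RR,\RR) \geq 1$ via the identity endomorphism. Therefore
\begin{equation*}
-r^2 C^2 \;=\; \chi(\RR,\RR) \;=\; h^0(\RR,\RR) + h^2(\RR,\RR) \;\geq\; 1,
\end{equation*}
which forces $C^2 < 0$, as required.

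I do not anticipate a serious obstacle here: the argument is essentially a direct combination of the decomposition in Remark \ref{remark restriction} with Riemann--Roch. The only point deserving a moment of care is checking that $\RR_C$ inherits the torsion rigid property (this is already recorded in Remark \ref{remark restriction}, via Lemma \ref{lem GEG}) and that the reduction from a potentially reducible support to an irreducible one is faithful to the geometric content of the claim, which it is, since the conclusion $C^2 < 0$ concerns one component at a time.
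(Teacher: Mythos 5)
Your proof is correct and follows essentially the same route as the paper: restrict to the component $C$ via Remark \ref{remark restriction}, note that $\RR_C$ is rigid so $\chi(\RR_C,\RR_C)=h^0+h^2>0$, and conclude from $\chi(\RR_C,\RR_C)=-c_1(\RR_C)^2=-r^2C^2$ that $C^2<0$. No issues.
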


\begin{proof}
Let $C$ be an irreducible component of $\supp(\RR)$. Take the restriction to $C$, we have an exact sequence 
$$0\to  \RR'\to \RR\to \RR_C\to 0. $$ 
By Remark \ref{remark restriction}, $\RR_C$ is rigid and in particular,
$\chi(\RR_C,\RR_C)>0$. On the other hand, by Riemann--Roch formula (see Subsection \ref{subsection RR}), we have
$\chi(\RR_C,\RR_C)=-c_1(\RR_C)^2.$ Hence $c_1(\RR_C)^2<0$, which implies that $C^2<0$.
\end{proof}

\subsection{Weak del Pezzo surfaces}
A smooth projective surface $X$ is a  {\it weak del Pezzo surface} if $-K_X$ is nef and big. The {\it degree} of $X$ is the self-intersection number $(-K_X)^2$. A weak del Pezzo surface is of {\it Type A} if its anticanonical model has at most $A_n$-singularities. We collect some basic facts on weak del Pezzo surfaces.

\begin{lem}[{cf. \cite[Theorem 8.3.2]{Dolgachev}}]\label{lem no base}
Let $X$ be a weak del Pezzo surface. Then $|-K_X|$ has no base components.
\end{lem}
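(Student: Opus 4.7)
The plan is to pass to the anticanonical model and reduce the statement to the fact that, on a Gorenstein del Pezzo surface, the anticanonical linear system has no fixed components. Since $-K_X$ is nef and big, the base-point-free theorem furnishes a birational morphism $\phi\colon X \to \bar{X}$, contracting precisely the $(-2)$-curves on $X$, such that $-K_X = \phi^{*}(-K_{\bar{X}})$ and $-K_{\bar{X}}$ is ample on the normal surface $\bar{X}$ (which carries only rational double point singularities). Since $\phi_{*}\OO_X = \OO_{\bar{X}}$, the projection formula gives $H^{0}(X, -K_X) \cong H^{0}(\bar{X}, -K_{\bar{X}})$, so every section of $-K_X$ is the pullback of a section of $-K_{\bar{X}}$.

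I argue by contradiction: suppose $F \subset X$ is an irreducible curve in the fixed part of $|-K_X|$, and split into two cases according to whether $F$ is contracted by $\phi$. If $F$ is not contracted, then $\phi(F)$ is a curve in $\bar{X}$, and the assumption that every section of $-K_X$ vanishes on $F$ forces every section of $-K_{\bar{X}}$ to vanish on $\phi(F)$. Thus $\phi(F)$ would be a fixed component of $|-K_{\bar{X}}|$, contradicting the classical statement that the ample anticanonical linear system on the Gorenstein del Pezzo surface $\bar{X}$ (with $h^{0}(-K_{\bar{X}}) = 1 + d \geq 2$) has no fixed components.

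If instead $F$ is contracted, then $F$ is a $(-2)$-curve mapping to some singular point $p \in \bar{X}$. Since $(-K_X)|_F \cong \OO_F$ is canonically trivial, any $t \in H^{0}(\bar{X}, -K_{\bar{X}})$ with $t(p) \neq 0$ pulls back to a section of $-K_X$ whose restriction to $F$ is a nonzero constant; hence $F$ cannot be fixed unless every such $t$ vanishes at $p$, i.e., $p \in \mathrm{Bs}|-K_{\bar{X}}|$. But the classical theory of Gorenstein del Pezzo surfaces shows that $\mathrm{Bs}|-K_{\bar{X}}|$ is either empty (for $d \geq 2$) or a single smooth point (for $d = 1$), and in particular avoids the singular locus of $\bar{X}$, yielding the desired contradiction.

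The main obstacle is supplying the input from the theory of (possibly singular) del Pezzo surfaces: namely, that $|-K_{\bar{X}}|$ has no fixed components and that its base points lie in the smooth locus. A self-contained argument can be assembled by combining Kawamata--Viehweg vanishing on $X$ (which, with Riemann--Roch, gives $h^{0}(-K_X) = 1 + d$), the identification $H^{0}(X, -K_X) = H^{0}(\bar{X}, -K_{\bar{X}})$, and a Bertini/adjunction analysis of general members of $|-K_{\bar{X}}|$ (which are smooth elliptic curves lying in the smooth locus of $\bar{X}$). These are precisely the ingredients packaged in Dolgachev's Theorem 8.3.2, which is the reference invoked here.
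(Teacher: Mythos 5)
The paper does not actually prove this lemma: it is stated as a bare citation of \cite[Theorem 8.3.2]{Dolgachev}, which is formulated for weak del Pezzo surfaces directly, so there is no in-paper argument to compare against. Judged on its own terms, your proposal contains no false step, but it also does not constitute a proof, and the gap is a circularity. You reduce the lemma to two inputs on the anticanonical model $\bar{X}$: (a) $|-K_{\bar{X}}|$ has no fixed components, and (b) the base locus of $|-K_{\bar{X}}|$ avoids $\mathrm{Sing}(\bar{X})$. Via the identification $H^0(X,-K_X)\cong H^0(\bar{X},-K_{\bar{X}})$ that you yourself set up, (a) is equivalent to ``no non-contracted fixed component'' and (b) is equivalent to ``no contracted fixed component'' (if $p\in\mathrm{Sing}(\bar{X})$ is a base point, every section $\phi^*t$ with $t(p)=0$ vanishes identically on the fibre $\phi^{-1}(p)$, so the $(-2)$-curves over $p$ are fixed components, and conversely). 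So (a) and (b) together are exactly the statement being proved, transported to $\bar{X}$; invoking them as ``classical theory'' is assuming the conclusion. Indeed, the standard development runs in the opposite direction: properties of $|-K_{\bar{X}}|$ on the Gorenstein del Pezzo surface are usually deduced from the corresponding properties on its minimal resolution $X$.

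Your closing paragraph is candid that the missing input is ``precisely the ingredients packaged in Dolgachev's Theorem 8.3.2,'' but that is the very reference the lemma cites, so the proposal ends up being a reformulation of the citation rather than an argument. To turn it into a proof one must supply the core content independently: e.g., use that a weak del Pezzo surface is $\bP^1\times\bP^1$, $\bF_2$, or a blow-up of $\bP^2$ at $9-d$ points in almost general position, identify $|-K_X|$ with the system of cubics through those points, and check directly that it has no fixed part; or argue intrinsically with $h^0(-K_X)=d+1\geq 2$ (Riemann--Roch plus Kawamata--Viehweg vanishing, as you note), adjunction, and the Hodge index theorem to rule out a nonzero fixed divisor. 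Absent such an argument, the proposal has the same logical status as the paper's bare citation.
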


\begin{lem}[{cf. \cite[Theorem 8.2.25]{Dolgachev}}]\label{lem no of -2}
Let $X$ be a weak del Pezzo surface of degree $d$. Then the number of $(-2)$-curves on $X$ is at most $9-d$.
\end{lem}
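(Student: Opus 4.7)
The plan is to bound the number of $(-2)$-curves by the rank of the sublattice $K_X^{\perp} \subset \Pic(X)$ and then to compute that rank explicitly. First I would note that every $(-2)$-curve $C$ lies in $K_X^\perp$: the adjunction formula $K_X \cdot C + C^2 = 2p_a(C)-2$ combined with $C^2 = -2$ and $p_a(C)=0$ gives $K_X \cdot C = 0$.

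Next I would determine $\rho(X) = 10-d$. Because $-K_X$ is nef and big, Kawamata--Viehweg vanishing yields $H^i(X,\OO_X)=0$ for $i>0$, hence $\chi(\OO_X)=1$. Noether's formula then gives $c_2(X) = 12 - K_X^2 = 12-d$, and combined with $b_1(X)=0$ this forces $b_2(X) = 10-d$. Since $h^{0,1}(X) = h^{0,2}(X) = 0$, the exponential sequence shows $\Pic(X)$ is torsion-free of rank $b_2(X)$, so $\rank K_X^\perp = 9-d$. The Hodge index theorem now applies: $(-K_X)^2 = d > 0$ means the intersection pairing on $\Pic(X) \otimes \mathbb{R}$ has signature $(1, 9-d)$, and so is negative definite on $K_X^\perp$.

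The key remaining step, which I expect to be the main technical point, is to show that the classes of distinct $(-2)$-curves are linearly independent in $\Pic(X)$. Given any relation $\sum a_i [C_i] = 0$ among distinct $(-2)$-curves $C_i$, I would split it into its positive and negative parts $P = \sum_{a_i > 0} a_i C_i$ and $N = -\sum_{a_j < 0} a_j C_j$. These are effective divisors with disjoint support, so $P \cdot N \geq 0$; the relation $P \equiv N$ then yields $P^2 = P \cdot N \geq 0$. But $P \in K_X^\perp$ and the intersection form is negative definite there, so $P^2 \leq 0$ with equality only when $P=0$. Therefore $P=0$ and by symmetry $N=0$, so all $a_i$ vanish. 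Consequently the $(-2)$-curves represent linearly independent classes inside $K_X^\perp$, and their number is at most $\rank K_X^\perp = 9-d$. The computation of $\rho(X)$ and the Hodge index setup are routine once the vanishing from the weak Fano hypothesis is in hand; it is only the positive/negative part argument for linear independence that requires a little care.
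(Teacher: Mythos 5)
Your argument is correct and complete. Note that the paper itself offers no proof of this lemma: it is quoted directly from Dolgachev's book (Theorem 8.2.25), so there is nothing in the text to compare against step by step. What you have written is the standard lattice-theoretic argument that underlies the cited result: adjunction places the $(-2)$-classes in $K_X^\perp$; vanishing plus Noether's formula gives $\rho(X)=b_2(X)=10-d$; the Hodge index theorem makes $K_X^\perp$ negative definite of rank $9-d$; and the decomposition of a relation into effective parts $P$ and $N$ with disjoint support, giving $P^2=P\cdot N\ge 0$ against negative definiteness, forces linear independence. The one point worth making explicit at the very end is why $[P]=0$ in $\Pic(X)\otimes\mathbb{R}$ forces $P=0$ as a divisor: a nonzero effective divisor has positive degree against an ample class, so its numerical class cannot vanish. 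With that sentence added, the proof is airtight and self-contained, and arguably more useful to a reader than the bare citation, since it avoids appealing to the classification machinery in Dolgachev. (An alternative route, closer in spirit to the geometry of the anticanonical model used elsewhere in the paper, is to observe that the $(-2)$-curves are exactly the curves contracted by $X\to X_{\mathrm{an}}$, and exceptional curves of a birational morphism of surfaces have negative definite, hence nondegenerate, intersection matrix; this gives the same independence without the positive/negative splitting.)
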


\begin{lem}\label{lem d>1}
Let $X$ be a weak del Pezzo surface of degree $d>1$.
Then the intersection number of a $(-1)$-curve with a chain of $(-2)$-curves is at most one.
\end{lem}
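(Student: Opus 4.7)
The plan is to apply the Hodge index theorem to $-K_X$, using that $(-K_X)^2 = d > 0$ because $-K_X$ is big. Suppose for contradiction that there exist a $(-1)$-curve $E$ and a chain of $(-2)$-curves $Z = C_1 \cup \cdots \cup C_n$ with $E \cdot Z \geq 2$. Since $E$ is not a component of $Z$, each $E \cdot C_i \geq 0$. I will construct from $E$ and a suitable connected subchain $Z' \subseteq Z$ an effective divisor $D := E + Z'$ satisfying $D \cdot (-K_X) = 1$ and $D^2 \geq 1$; Hodge index will then force $d \leq 1$.

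To choose $Z'$: if some $E \cdot C_i \geq 2$, take $Z' = C_i$; otherwise $E \cdot C_i \in \{0,1\}$ for all $i$, so the hypothesis $E \cdot Z \geq 2$ gives indices $i_1 < i_2$ with $E \cdot C_{i_1} = E \cdot C_{i_2} = 1$, and I take $Z' := C_{i_1} \cup C_{i_1+1} \cup \cdots \cup C_{i_2}$. In either case $Z'$ is a connected subchain of some length $k \geq 1$, and a direct count gives
$$ (Z')^2 = \sum_{l} C_l^2 + 2\sum_{l<m} C_l \cdot C_m = -2k + 2(k-1) = -2, $$
while $Z' \cdot K_X = 0$ because every $C_l$ is a $(-2)$-curve. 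By construction $E \cdot Z' \geq 2$.

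Consequently $D = E + Z'$ satisfies $D \cdot (-K_X) = 1$ and
$$ D^2 = E^2 + 2(E \cdot Z') + (Z')^2 = -1 + 2(E \cdot Z') - 2 = 2(E \cdot Z') - 3 \geq 1. $$
The Hodge index theorem applied to $-K_X$ (valid since $(-K_X)^2 = d > 0$) yields
$$ 1 = (D \cdot (-K_X))^2 \geq D^2 \cdot (-K_X)^2 \geq d, $$
contradicting $d > 1$.

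There is no substantial obstacle here; the only real choice is to pair $E$ with a subchain rather than with a single $C_i$ or with all of $Z$, exploiting the fact that every nonempty connected subchain of a $(-2)$-chain has self-intersection $-2$. This keeps $D \cdot (-K_X) = 1$ constant while letting $D^2$ grow linearly with $E \cdot Z'$, after which Hodge index does the rest.
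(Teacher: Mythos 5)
Your proof is correct and follows essentially the same route as the paper: form the effective divisor (a $(-1)$-curve plus a chain of $(-2)$-curves), note it meets $-K_X$ with degree one while having self-intersection at least one, and conclude by the Hodge index theorem. The only difference is that the paper applies this directly to the whole chain $Z$ (whose self-intersection is already $-2$ and which meets $E$ at least twice by hypothesis), so your passage to a subchain $Z'$ is an unnecessary refinement.
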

\begin{proof}
Take a chain of $(-2)$-curves $C_1\cup\cdots\cup C_n$ and a $(-1)$-curve $D$. Assume that $\sum_{i=1}^nC_i\cdot D\geq 2$, then 
$$\Big(\sum_{i=1}^nC_i+D\Big)^2=\Big(\sum_{i=1}^nC_i\Big)^2+2\sum_{i=1}^nC_i\cdot D+D^2=-2+2\sum_{i=1}^nC_i\cdot D-1\geq 1.$$
By the Hodge index theorem,
$$
(-K_X)^2\cdot\Big(\sum_{i=1}^nC_i+D\Big)^2\leq \Big((-K_X)\cdot\Big(\sum_{i=1}^nC_i+D\Big)\Big)^2=1.
$$
This implies that $(-K_X)^2=1$, which is a contradiction.
\end{proof}
We remark that on weak del Pezzo surfaces of degree one, it is possible that one $(-1)$-curve intersects with a chain of $(-2)$-curves at two points (cf. \cite[Lemma 2.8]{Kosta}).
\subsection{Riemann--Roch formula}\label{subsection RR}
We recall Riemann--Roch formula for torsion sheaves on surfaces (cf. \cite[(1.1)]{KO}).
\begin{lem}[Riemann--Roch formula for torsion sheaves]
For two torsion sheaves $\EE$ and $\FF$ on a smooth projective surface $X$, the Euler characteristic can be calculated by
$
\chi(\EE, \FF)=-c_1(\EE)\cdot c_1(\FF).$
\end{lem}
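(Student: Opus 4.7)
The plan is to derive the identity as a direct application of the Hirzebruch--Riemann--Roch theorem. For coherent sheaves $\EE,\FF$ on a smooth projective surface $X$, HRR gives
$$\chi(\EE,\FF)=\int_X \operatorname{ch}(\EE)^{\vee}\operatorname{ch}(\FF)\operatorname{td}(X),$$
where $\operatorname{ch}(\EE)^{\vee}=\sum_{i}(-1)^{i}\operatorname{ch}_{i}(\EE)$, and only the codimension-two component of the integrand contributes.

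First I would record the relevant Chern characters. Since $\EE$ is torsion on $X$, its rank $\operatorname{ch}_0(\EE)=0$, so
$$\operatorname{ch}(\EE)=c_1(\EE)+\tfrac{1}{2}\bigl(c_1(\EE)^2-2c_2(\EE)\bigr),\qquad \operatorname{ch}(\EE)^{\vee}=-c_1(\EE)+\tfrac{1}{2}\bigl(c_1(\EE)^2-2c_2(\EE)\bigr),$$
and likewise for $\FF$. The Todd class takes the form $\operatorname{td}(X)=1+\tfrac{1}{2}c_1(X)+\tfrac{1}{12}\bigl(c_1(X)^2+c_2(X)\bigr)$.

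Finally I would extract the degree-two part of the triple product. In $\operatorname{ch}(\EE)^{\vee}\operatorname{ch}(\FF)$ every degree-$0\times$degree-$2$ cross term vanishes because $\operatorname{ch}_0(\EE)=\operatorname{ch}_0(\FF)=0$, so the only surviving contribution in codimension two is $(-c_1(\EE))\cdot c_1(\FF)$. Multiplying by $\operatorname{td}(X)$, only its leading constant $1$ matters in top degree, so the final integral equals $-c_1(\EE)\cdot c_1(\FF)$. The argument is entirely formal; the only point that requires a bit of care is the sign produced by dualizing the Chern character, which is precisely what accounts for the minus sign in the statement.
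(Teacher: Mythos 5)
Your proposal is correct, and it is essentially the same argument the paper relies on: the paper gives no proof of its own, only a citation to formula (1.1) of Kuleshov--Orlov, which is the general Riemann--Roch expression for $\chi(\EE,\FF)$ on a surface, and your Hirzebruch--Riemann--Roch computation is exactly that formula specialized to $\operatorname{ch}_0(\EE)=\operatorname{ch}_0(\FF)=0$. The only point worth flagging is that writing $\chi(\EE,\FF)=\int_X \operatorname{ch}(\EE)^{\vee}\operatorname{ch}(\FF)\operatorname{td}(X)$ for a non--locally-free $\EE$ implicitly uses $\operatorname{ch}(\EE^{\vee})=\operatorname{ch}(\EE)^{\vee}$ (via a finite locally free resolution and additivity of $\operatorname{ch}$), which is standard and harmless here.
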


\subsection{A polynomial inequality}\label{polynomial}
In this subsection, we treat a special polynomial which naturally appears in self-intersection numbers of a union of negative curves.

For positive integers $r_1, r_2, \ldots, r_n$, and $1\leq k\leq n$, define the polynomial
$$f(r_1, r_2, \ldots, r_n; k)=\sum_{i=1}^n r_i^2-\sum_{i=1}^{n-1}r_ir_{i+1}-r_k.$$
\begin{prop}\label{prop polynomial}
For positive integers $r_1, r_2, \ldots, r_n$, and $k$, 
$$f(r_1, r_2, \ldots, r_n; k)\geq 0$$
always holds. Moreover, $f(r_1, r_2, \ldots, r_n; k)= 0$ if and only if the following conditions hold:
\begin{enumerate}
\item $r_1=r_n=1$,
\item $0\leq r_{i+1}-r_i\leq 1$ if $i<k$,
\item $0\leq r_{i}-r_{i+1}\leq 1$ if $i\geq k$.
\end{enumerate}
\end{prop}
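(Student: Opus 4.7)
The plan is to reduce the inequality to an obviously nonnegative sum over integers. The key algebraic identity is
$$\sum_{i=1}^n r_i^2 - \sum_{i=1}^{n-1} r_i r_{i+1} = \tfrac{1}{2}\sum_{i=1}^{n-1}(r_{i+1}-r_i)^2 + \tfrac{1}{2}(r_1^2 + r_n^2),$$
which follows by expanding $\sum(r_{i+1}-r_i)^2$ and rearranging. Introducing the differences $d_i := r_{i+1}-r_i$ for $1 \le i \le n-1$, I would then eliminate $r_k$ via the telescoping identity $2r_k = r_1 + r_n + \sum_{i=1}^{k-1} d_i - \sum_{i=k}^{n-1} d_i$ (obtained by averaging $r_k = r_1 + \sum_{i<k} d_i$ with $r_k = r_n - \sum_{i \ge k} d_i$).

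Putting these two identities together yields
$$2f(r_1,\dots,r_n;k) = r_1(r_1-1) + r_n(r_n-1) + \sum_{i=1}^{k-1} d_i(d_i-1) + \sum_{i=k}^{n-1} d_i(d_i+1).$$
Each term on the right-hand side is of the form $m(m-1)$ or $m(m+1)$ for an integer $m$, and so is $\ge 0$. This proves $f \ge 0$.

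For the equality characterization, I would read off that $2f = 0$ forces every summand to vanish. Since $r_1, r_n$ are positive integers, $r_j(r_j-1) = 0$ forces $r_1 = r_n = 1$. For an integer $d$, $d(d-1) = 0$ iff $d \in \{0,1\}$, and $d(d+1) = 0$ iff $d \in \{-1,0\}$; translating back to the $r_i$ gives precisely conditions (2) and (3). Conversely, these three conditions make each summand zero, so $f = 0$.

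There is no real obstacle here; the only thing to be careful about is the bookkeeping in the identity for $2r_k$ (making sure the signs on the two halves of the telescoping sum are correct) and using the positivity/integrality of $r_i$ when deducing the equality case. Once the rewrite $2f = \sum r_j(r_j \mp 1) + \sum d_i(d_i \mp 1)$ is in hand, both the inequality and the equality characterization are immediate.
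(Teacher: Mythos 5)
Your proof is correct and follows essentially the same route as the paper: both start from $2f=r_1^2+\sum_{i=1}^{n-1}(r_{i+1}-r_i)^2+r_n^2-2r_k$ and eliminate $r_k$ by telescoping from both ends. The only difference is packaging—you fold the paper's chain of inequalities $m^2\geq |m|\geq \pm m$ into the single identity $2f=r_1(r_1-1)+r_n(r_n-1)+\sum_{i<k}d_i(d_i-1)+\sum_{i\geq k}d_i(d_i+1)$, which makes the equality analysis slightly more transparent but is the same argument.
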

\begin{proof}
It is easy to see that
\begin{align*}
 {}&2f(r_1, r_2, \ldots, r_n; k)\\
={}&r^2_1+\sum_{i=1}^{n-1}(r_i-r_{i+1})^2+r^2_n-2r_k\\
\geq {}&r_1+\sum_{i=1}^{n-1}|r_i-r_{i+1}|+r_n-2r_k\\
\geq {}&\left(r_1+\sum_{i=1}^{k-1}(r_{i+1}-r_i)\right)+\left(\sum_{i=k}^{n-1}(r_i-r_{i+1})+r_n\right)-2r_k=0.
\end{align*}
This proves the inequality $f(r_1, r_2, \ldots, r_n; k)\geq 0$. If the equality holds, then all the above inequalities become equalities.
From the first inequality, we get $r_1=r_n=1$ and $|r_i-r_{i+1}|\leq 1$ for $1\leq i\leq n-1$. From the second, we get $r_{i+1}-r_i\geq  0$ if $i<k$, and $ r_{i}-r_{i+1}\geq 0$ if $i\geq k$.
 \end{proof}

\section{Examples}\label{section example}
\noindent In this section, before proving the theorems, we provide several interesting examples of torsion exceptional sheaves on weak del Pezzo surfaces.
The examples illuminate how we may apply  Lemma \ref{lem EES} to reduce the torsion exceptional sheaf to a line bundle on a $(-1)$-curve by spherical twists.

\begin{example}
Let $X$ be a weak del Pezzo surface of degree $d>1$ whose anticanonical model has at most $A_1$-singularities. Then by our proof, every torsion exceptional sheaf on $X$ has the form $\OO_{D\cup C_1\cup\cdots \cup C_n}(d, a_1, \ldots, a_n)$, where $C_i$ are disjoint $(-2)$-curves, $D$ is a $(-1)$-curve intersecting with each $C_i$, and $d, a_i$ are integers. Note that $n$ can be $0$ which means there is no $(-2)$-curve in the support. Similar result holds true for weak del Pezzo surfaces of degree $d>1$ whose anticanonical model has at most $A_2$-singularities.
\end{example}

The following example shows that the scheme theoretic support of a  torsion exceptional sheaf can be non-reduced.
\begin{example}\label{example 2}Let $X$ be a smooth projective surface, $C_1\cup C_2\cup C_3$ a chain of three $(-2)$-curves, and $D$ a $(-1)$-curve. Assume that $D\cdot C_2=1$ and $D\cdot C_1=D\cdot C_3=0$. Then the structure sheaf $\OO_{D\cup C_1\cup 2C_2\cup C_3}$ is a torsion exceptional sheaf on $X$ with non-reduced support. In fact, by applying Lemma \ref{lem EES} to the following exact sequences
$$
0\to \OO_{D\cup C_1\cup C_2\cup C_3}(-1, -1, 2, -1) \to  \OO_{D\cup C_1\cup 2C_2\cup C_3} \to  \OO_{C_2} \to 0,
$$
$$
0\to \OO_{D}(-2) \to  \OO_{D\cup C_1\cup C_2\cup C_3}(-1, -1, 2, -1) \to  \OO_{C_1\cup C_2\cup C_3}( -1, 2, -1)  \to 0,
$$
we get
$$
\OO_{D\cup C_1\cup 2C_2\cup C_3}=T_{\OO_{C_2}}\circ T_{\OO_{C_1\cup C_2\cup C_3}( -1, 2, -1)}( \OO_{D}(-2) ).
$$
\end{example}
The following example shows that the support of a  torsion exceptional sheaf on a weak del Pezzo surface of degree one can contain loops.
\begin{example}
Let $X$ be a smooth projective surface, $C_1\cup C_2\cup C_3$ a chain of three $(-2)$-curves, and $D$ a $(-1)$-curve. Assume that $D\cdot C_2=0$ and $D\cdot C_1=D\cdot C_3=1$, that is, $C_1, C_2, C_3, D$ form a loop (this might happen, for example, on the minimal resolution of a singular del Pezzo surface of degree one with one $A_3$-singularity, cf. \cite[Lemma 2.8]{Kosta}). Then the unique non-trivial extension $\EE$ of $\OO_{C_2\cup C_3}$ by $\OO_{D\cup C_1\cup C_2}$ is a torsion exceptional sheaf on $X$ whose support is a loop. In fact, 
$$
h^1(\OO_{C_2\cup C_3}, \OO_{D\cup C_1\cup C_2})=1
$$
since 
\begin{align*}
h^0(\OO_{C_2\cup C_3}, \OO_{D\cup C_1\cup C_2}){}&=0,\\
\chi(\OO_{C_2\cup C_3}, \OO_{D\cup C_1\cup C_2}){}&=-1,\\
h^0(\OO_{D\cup C_1\cup C_2}, \OO_{C_2\cup C_3}){}&=0.
\end{align*}
By applying Lemma \ref{lem EES} twice, we get
$$
\EE=T_{\OO_{C_2\cup C_3}}\circ T_{\OO_{C_1\cup C_2}}( \OO_{D}(-1) ).
$$
\end{example}

From these examples, one can get a rough idea on how we apply  Lemma \ref{lem EES} to cut down the support of a torsion exceptional sheaf. In fact, it should be easy to verify Conjecture \ref{conj} for the structure sheaf $\OO_\Gamma$ on arbitrary weak del Pezzo surface where $\Gamma$ is a proper subscheme and $\OO_\Gamma$ is assumed to be a torsion exceptional sheaf. However, in general torsion exceptional sheaf could not have such a good structure.
\section{Factorizations of rigid sheaves}
\noindent In this section, we assume that $X$ is a  smooth projective surface. All sheaves are considered to be coherent on $X.$ We will define factorizations of rigid sheaves and give basic properties.
\begin{definition}
A coherent sheaf $\RR$ has a {\it factorization}
$
(\GG_1, \GG_2,\ldots, \GG_n)
$
if there exists a filtration of coherent sheaves
$$
0= \FF_0\subset \FF_1\subset \FF_2 \subset \cdots \subset \FF_n=\RR,
$$
such that
$\FF_i/\FF_{i-1}\cong \GG_i$ for $1\leq i\leq n$ 
and we write this factorization as
$$\RR\equiv (\GG_1, \ldots, \GG_n).$$ 
This factorization is said to be {\it perfect} if $h^0(\GG_i, \GG_j)=h^2(\GG_j, \GG_i)=0$ for all $i<j$.
\end{definition}
\begin{remark}For a factorization $\RR\equiv (\GG_1, \ldots, \GG_n)$ and some index $1\leq i_0\leq n$, if $\GG_i\cong \GG_i\otimes \omega_X$ for all $i\neq i_0$ (for example, $\GG_i$ is a torsion sheaf whose support consists of $(-2)$-curves), then $h^0(\GG_i, \GG_j)=h^2(\GG_j, \GG_i)$ for all $i<j$ by duality. Hence to check the perfectness of this factorization, one only need to check that $h^0(\GG_i, \GG_j)=0$ for all $i<j$. Note that in subsequent sections, we are always in such situation when we deal with factorizations.
\end{remark}
\begin{example}[{\cite[Lemma 2.4]{OU}}]\label{example HN filtration}
Let $C$ be a $(-2)$-curve on a smooth projective surface $X$.
Let $\FF$ be a pure one-dimensional sheaf on the scheme $mC$.
Then the subquotients of the Harder--Narasimhan filtration
$$
0=\FF^0\subset\FF^1 \subset\cdots  \subset\FF^n =\FF$$
 of $\FF$ are of the form
$$
(\FF^1 / \FF^0,
\FF^2 / \FF^1,
\dots,
\FF^n / \FF^{n-1})
=
(\OO_C(a_1)^{\oplus r_1}, \OO_C(a_2)^{\oplus r_2}, \dots, \OO_C(a_n)^{\oplus r_n})
$$
with $a_1>a_2>\cdots>a_n$ and $r_i>0$, which gives a perfect factorization of $\FF$.
\end{example}

The following lemma is a direct consequence of Lemma \ref{lem GEG}.

\begin{lem}\label{lem G rigid}
Let $\RR$ be a rigid sheaf with a perfect factorization
$
(\GG_1,\ldots, \GG_n)
$
and $\FF_0\subset \FF_1\subset \FF_2 \subset \cdots \subset \FF_n
$ the corresponding filtration.
Then $\FF_j/\FF_i$ is rigid for all $i<j$.
\end{lem}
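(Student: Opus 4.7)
The plan is to apply Mukai's Lemma (Lemma \ref{lem GEG}) twice: once to cut $\RR$ into $\FF_j$ and $\RR/\FF_j$, and once to cut $\FF_j$ into $\FF_i$ and $\FF_j/\FF_i$. To do so, I need to verify that the Hom-vanishing hypotheses of Mukai's Lemma propagate from the $\GG_k$'s to arbitrary consecutive pieces of the filtration.

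First I would establish, for all $0\leq a<b\leq n$, the vanishings
$$h^0(\FF_a,\FF_b/\FF_a)=0\quad\text{and}\quad h^2(\FF_b/\FF_a,\FF_a)=0.$$
These are routine long-exact-sequence chases starting from the perfectness hypothesis $h^0(\GG_k,\GG_l)=h^2(\GG_l,\GG_k)=0$ for $k<l$. Concretely, an induction on $a$ using $\Hom(-,\GG_l)$ applied to $0\to \FF_{a-1}\to \FF_a\to \GG_a\to 0$ gives $h^0(\FF_a,\GG_l)=0$ whenever $a<l$. A second induction on $b-a$ using $\Hom(\FF_a,-)$ applied to $0\to \FF_{b-1}/\FF_a\to \FF_b/\FF_a\to \GG_b\to 0$ then upgrades this to $h^0(\FF_a,\FF_b/\FF_a)=0$. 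The $h^2$ vanishing is obtained by the dual argument (or, in the situation of the remark following the definition where each $\GG_i$ is $\omega_X$-invariant, directly by Serre duality from the $h^0$ statement).

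With those vanishings in hand, I would apply Mukai's Lemma to
$$0\to \FF_j \to \RR \to \RR/\FF_j \to 0,$$
whose hypotheses are $h^1(\RR,\RR)=0$ (by rigidity of $\RR$) together with $h^0(\FF_j,\RR/\FF_j)=0$ and $h^2(\RR/\FF_j,\FF_j)=0$ (both just established). Part (1) of Lemma \ref{lem GEG} then shows $\FF_j$ is rigid. A second application to
$$0\to \FF_i \to \FF_j \to \FF_j/\FF_i \to 0,$$
using the just-proved rigidity of $\FF_j$ and the vanishings $h^0(\FF_i,\FF_j/\FF_i)=h^2(\FF_j/\FF_i,\FF_i)=0$, yields the rigidity of $\FF_j/\FF_i$.

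There is no serious obstacle here; the only care needed is the bookkeeping of the nested inductions when propagating the $\Hom$-vanishings from the $\GG_k$'s to $\FF_a$ and $\FF_b/\FF_a$. Everything else is a direct application of Lemma \ref{lem GEG}.
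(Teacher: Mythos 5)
Your proof is correct and is exactly the argument the paper intends: the paper states this lemma without proof, calling it ``a direct consequence of Lemma \ref{lem GEG}'', and your write-up simply supplies the routine induction propagating the $h^0$ and $h^2$ vanishings from the $\GG_k$'s to the pairs $(\FF_i,\FF_j/\FF_i)$ before applying Mukai's lemma twice. No gap; this matches the paper's approach.
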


In applications, we usually need to get new factorizations from old ones.
Here we give some lemmas about operations on  factorizations.

\begin{lem}\label{lem change}
Let $\RR$ be a coherent sheaf with a factorization
$
(\GG_1,\ldots, \GG_n)
$
and $\FF_0\subset \FF_1\subset \FF_2 \subset \cdots \subset \FF_n
$ the corresponding filtration.
If $\FF_{i+1}/\FF_{i-1}$ has another factorization $(\GG'_{i}, \GG'_{i+1})$ for some $i$, 
then  $\RR$ has another factorization
$$
(\GG_1,\ldots, \GG_{i-1}, \GG'_i,\GG'_{i+1}, \GG_{i+2}, \ldots, \GG_n).
$$
In particular, if $h^1(\GG_{i+1}, \GG_i)=0$, then we are free to change the order of $\GG_{i+1}$ and  $\GG_i$ in a  factorization.
\end{lem}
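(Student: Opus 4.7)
The plan is to treat the two assertions separately, with the first being a direct construction at the level of filtrations and the second a formal consequence of the vanishing hypothesis.

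For the first assertion, I would refine the filtration $\FF_0\subset \FF_1\subset \cdots\subset \FF_n$ by inserting a single new term strictly between $\FF_{i-1}$ and $\FF_{i+1}$. Let $\pi\colon \FF_{i+1}\twoheadrightarrow \FF_{i+1}/\FF_{i-1}$ be the quotient map, and let
$$
0\subset \GG'_i\subset \FF_{i+1}/\FF_{i-1}
$$
be the two-step filtration realizing the factorization $(\GG'_i,\GG'_{i+1})$, so that the subobject is $\GG'_i$ and the quotient is $\GG'_{i+1}$. Define $\FF'_i:=\pi^{-1}(\GG'_i)\subset \FF_{i+1}$. Then $\FF_{i-1}\subset \FF'_i\subset \FF_{i+1}$, with $\FF'_i/\FF_{i-1}\cong \GG'_i$ and $\FF_{i+1}/\FF'_i\cong \GG'_{i+1}$ by construction. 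Replacing $\FF_i$ by $\FF'_i$ in the original filtration produces a filtration whose consecutive subquotients are exactly the sequence asserted in the statement.

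For the ``in particular'' clause, I would apply the first assertion to the short exact sequence
$$
0\to \GG_i\to \FF_{i+1}/\FF_{i-1}\to \GG_{i+1}\to 0.
$$
The hypothesis $h^1(\GG_{i+1},\GG_i)=0$ forces $\Ext^1(\GG_{i+1},\GG_i)=0$, so this extension splits and $\FF_{i+1}/\FF_{i-1}\cong \GG_i\oplus \GG_{i+1}$. Consequently it admits the reversed factorization $(\GG_{i+1},\GG_i)$, obtained by using $\GG_{i+1}$ as the subobject of the direct sum and $\GG_i$ as the quotient. Applying the first part with $(\GG'_i,\GG'_{i+1})=(\GG_{i+1},\GG_i)$ then yields the desired swapped factorization.

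There is no real obstacle here: the argument is a routine exercise in taking preimages under quotient maps plus the standard interpretation of $\Ext^1$ as extension classes. The only minor care needed is to keep track of which piece is the subobject and which is the quotient when swapping, to make sure the direct-sum decomposition really produces the claimed reversed two-step filtration. No properties beyond the definitions and the vanishing of a single $\Ext^1$ are used, so perfectness plays no role in this lemma.
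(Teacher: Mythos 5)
Your proof is correct; the paper itself dismisses this lemma with ``Easy,'' and your argument (refining the filtration by taking the preimage of $\GG'_i$ under $\FF_{i+1}\twoheadrightarrow\FF_{i+1}/\FF_{i-1}$, then splitting the extension via $\Ext^1(\GG_{i+1},\GG_i)=0$ for the reordering claim) is exactly the standard verification the authors left implicit. No issues.
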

\begin{proof}Easy.
\end{proof}
\begin{lem}
Let $\RR$ be a coherent sheaf with a perfect factorization
$
(\GG_1,\ldots, \GG_n)
$
and $\FF_0\subset \FF_1\subset \FF_2 \subset \cdots \subset \FF_n
$ the corresponding filtration. Then 
$
(\FF_{n-1}, \GG_n)
$
is a perfect factorization of $\RR$.
\end{lem}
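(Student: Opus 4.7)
The statement is that from a perfect factorization $(\GG_1,\ldots,\GG_n)$ of $\RR$ with filtration $\FF_0\subset\cdots\subset\FF_n$, we can ``collapse'' the first $n-1$ steps to get another perfect factorization $(\FF_{n-1},\GG_n)$ of $\RR$. The factorization part is immediate from the filtration $0\subset\FF_{n-1}\subset\FF_n=\RR$ with quotient $\GG_n$, so the only nontrivial content is the perfectness condition, namely
$$
h^0(\FF_{n-1},\GG_n)=0\quad\text{and}\quad h^2(\GG_n,\FF_{n-1})=0.
$$

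My plan is to prove both vanishings by induction along the existing filtration $\FF_0\subset\FF_1\subset\cdots\subset\FF_{n-1}$. For the first, I would apply the contravariant functor $\Hom(-,\GG_n)$ to the short exact sequences
$$
0\to \FF_{i-1}\to \FF_i\to \GG_i\to 0,\qquad 1\le i\le n-1,
$$
to obtain the long exact sequence
$$
0\to \Hom(\GG_i,\GG_n)\to \Hom(\FF_i,\GG_n)\to \Hom(\FF_{i-1},\GG_n)\to \Ext^1(\GG_i,\GG_n)\to\cdots.
$$
The perfectness hypothesis on $(\GG_1,\ldots,\GG_n)$ gives $\Hom(\GG_i,\GG_n)=0$ for every $i<n$; together with the inductive hypothesis $\Hom(\FF_{i-1},\GG_n)=0$ (with the trivial base case $\FF_0=0$) this forces $\Hom(\FF_i,\GG_n)=0$, and iterating up to $i=n-1$ yields $h^0(\FF_{n-1},\GG_n)=0$.

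For $h^2(\GG_n,\FF_{n-1})=0$ I would run the symmetric argument: apply the covariant functor $\Hom(\GG_n,-)$ to the same short exact sequences, getting
$$
\cdots\to \Ext^2(\GG_n,\FF_{i-1})\to \Ext^2(\GG_n,\FF_i)\to \Ext^2(\GG_n,\GG_i)\to 0\text{ (on the right as needed)},
$$
and use $\Ext^2(\GG_n,\GG_i)=0$ for $i<n$ from the perfectness assumption together with the inductive hypothesis $\Ext^2(\GG_n,\FF_{i-1})=0$. Again the base case is $\FF_0=0$, and the induction terminates at $i=n-1$.

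There is no real obstacle here; the only thing to be mindful of is that the perfectness condition $h^0(\GG_i,\GG_j)=h^2(\GG_j,\GG_i)=0$ is stated only for $i<j$, and this is precisely the range we need when pairing each $\GG_i$ (for $i<n$) with $\GG_n$. So the proof is essentially a two-line induction using the long exact sequences of $\Ext$ applied to the filtration; no additional hypothesis or auxiliary lemma (such as Lemma \ref{lem GEG} or Lemma \ref{lem change}) is required.
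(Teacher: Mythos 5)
Your proof is correct, and it is exactly the routine argument the paper omits (its proof is simply ``Easy.''): both vanishings $h^0(\FF_{n-1},\GG_n)=0$ and $h^2(\GG_n,\FF_{n-1})=0$ follow by the induction along the filtration via the long exact sequences of $\Ext$, using the perfectness hypothesis for the pairs $(\GG_i,\GG_n)$ with $i<n$. Nothing further is needed.
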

\begin{proof}Easy.
\end{proof}
\begin{lem}\label{lem extend GS}
Let $\GG$ and $\SSS$ be two coherent sheaves and $r$ a positive integer. Assume that $\SSS$ is simple (i.e., $h^0(\SSS,\SSS)=1$). 
\begin{enumerate}
\item Assume that $h^1 (\SSS, \GG)=1$. Denote by $\GG'$ the unique non-trivial extension of $\SSS$ by $\GG$. 
Then $h^0(\SSS, \GG)=h^0(\SSS, \GG')$, and any non-trivial extension of $\SSS^{\oplus r}$ by $\GG$ is isomorphic to $\SSS^{\oplus (r-1)}\oplus \GG'$. 

\item Assume  that $h^1 (\GG, \SSS)=1$. Denote by $\GG''$ the unique non-trivial extension of $\GG$ by $\SSS$. 
Then $h^0(\GG, \SSS)=h^0(\GG'', \SSS)$, and any non-trivial extension of $\GG$ by $\SSS^{\oplus r}$ is isomorphic to $\SSS^{\oplus (r-1)}\oplus \GG''$. 

\end{enumerate}
\end{lem}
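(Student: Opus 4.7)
The plan is to prove (1) in full; part (2) is its formal dual, obtained by running the same argument on $\Hom(-,\SSS)$ applied to $0\to\SSS\to\GG''\to\GG\to 0$ in place of $\Hom(\SSS,-)$, and using the analogous $\GL_r(\bC)$-action on $\Ext^1(\GG,\SSS^{\oplus r})$. In both parts, the only properties of $\SSS$ that are used are its simplicity and the one-dimensionality of the relevant $\Ext^1$.

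For the equality $h^0(\SSS,\GG) = h^0(\SSS,\GG')$, the plan is to apply $\Hom(\SSS,-)$ to the defining extension
$$0 \to \GG \to \GG' \to \SSS \to 0$$
and read off the beginning of the long exact sequence
$$0 \to \Hom(\SSS,\GG) \to \Hom(\SSS,\GG') \to \Hom(\SSS,\SSS) \xrightarrow{\delta} \Ext^1(\SSS,\GG).$$
The standard identification of the connecting map shows that $\delta$ sends $\id_\SSS$ to the extension class of $\GG'$, which is nonzero by the definition of $\GG'$. Since $\SSS$ is simple we have $\Hom(\SSS,\SSS)=\bC$, and by hypothesis $\Ext^1(\SSS,\GG)=\bC$, so $\delta$ is an isomorphism. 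Exactness then forces $\Hom(\SSS,\GG)\xrightarrow{\sim}\Hom(\SSS,\GG')$, and the desired equality of dimensions follows.

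For the description of non-trivial extensions of $\SSS^{\oplus r}$ by $\GG$, the key tool is the natural decomposition
$$\Ext^1(\SSS^{\oplus r}, \GG) \cong \Ext^1(\SSS, \GG)^{\oplus r} \cong \bC^r,$$
which is equivariant for the action of $\Aut(\SSS^{\oplus r}) = \GL_r(\bC)$ (the last equality uses simplicity of $\SSS$, giving $\End(\SSS^{\oplus r}) = M_r(\bC)$) via the standard representation on $\bC^r$. Given a non-trivial extension $0 \to \GG \to \EE \to \SSS^{\oplus r} \to 0$ with class $v\neq 0$, I pick $g\in\GL_r(\bC)$ with $g\cdot v=(1,0,\ldots,0)$; the induced automorphism $\phi$ of $\SSS^{\oplus r}$ yields an isomorphism (via the pullback diagram of extensions) between $\EE$ and the middle term of the extension of class $(1,0,\ldots,0)$. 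The latter extension restricts to the non-trivial class on the first summand and to the trivial class on each of the remaining $r-1$ summands, so its middle term is visibly $\GG'\oplus\SSS^{\oplus(r-1)}$. Thus $\EE\cong\GG'\oplus\SSS^{\oplus(r-1)}$, as claimed.

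The only step that requires genuine care is the $\GL_r$-equivariance of the decomposition of $\Ext^1(\SSS^{\oplus r},\GG)$, together with the fact that two extensions whose classes differ by an element of $\Aut(\SSS^{\oplus r})$ have isomorphic middle terms. Both are standard, but should be made explicit in the formal write-up; beyond that the argument is essentially linear algebra, and I expect no real obstacle.
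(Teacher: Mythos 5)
Your proposal is correct and follows essentially the same route as the paper's proof: the long exact sequence for $\Hom(\SSS,-)$ with the connecting map $\delta$ sending $\id_\SSS$ to the nonzero extension class, and the $\GL(r,\CC)$-action on $\Ext^1(\SSS^{\oplus r},\GG)\cong\CC^r$ to normalize the class to a single nonzero coordinate. The only cosmetic difference is that the paper records only the injectivity of $\delta$ (which suffices), while you note it is in fact an isomorphism.
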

\begin{proof}
(1) Consider the exact sequence
$$
0\to  \Hom(\SSS, \GG) \to \Hom(\SSS, \GG') \to \Hom(\SSS, \SSS) \overset{\delta}{\to} \Ext^1(\SSS, \GG)
$$
induced by the extension $$0\to \GG\to \GG'\to \SSS\to 0.$$ Because the extension is non-trivial and $\SSS$ is simple, the map $\delta$ is injective. Hence $h^0(\SSS, \GG)=h^0(\SSS, \GG')$.

Consider a non-trivial extension $\RR$ corresponding to 
$$
\eta=(\eta_1, \ldots, \eta_r)\in \Ext^1(\SSS^{\oplus r}, \GG)\cong \CC^r.
$$
As $\SSS$ is simple, $\Aut(\SSS^{\oplus r})=\GL(r, \CC)$. Since $\Aut(\SSS^{\oplus r})$ acts on $\Ext^1(\SSS^{\oplus r}, \GG)\cong \CC^r$
through the natural action of $\GL(r, \CC)$, after taking an automorphism of $\SSS^{\oplus r}$, we may assume that $\eta_i=0$ except for one index $i_0$. Hence $\RR\cong \SSS^{\oplus (r-1)}\oplus \GG'$.

(2) can be proved similarly.
\end{proof}

\begin{lem}\label{lem GSH}
Let $\RR$ be a rigid sheaf with a perfect factorization
$$
(\GG_1,\ldots, \GG_n, \SSS^{\oplus r}, \HH_1,\ldots, \HH_m).
$$
Assume that $\SSS$ is spherical.
\begin{enumerate}
 \item Suppose that $h^0 (\SSS, \GG_n)=0$ and $\chi  (\SSS, \GG_n)=-1$, then
there is a new perfect factorization
$$
(\GG_1,\ldots, \GG_{n-1}, \SSS^{\oplus (r-1)}, \GG'_n, \HH_1,\ldots, \HH_m).
$$
Here $\GG'_n$ is the (unique) non-trivial extension of $\SSS$ by $\GG_n$. 

 \item Suppose that $h^0 (\HH_1, \SSS)=0$ and  $\chi   (\HH_1, \SSS)=-1$, then
there is a new perfect factorization
$$
(\GG_1,\ldots, \GG_n, \HH'_1,  \SSS^{\oplus (r-1)}, \HH_2,\ldots, \HH_m).
$$
Here $\HH'_1$ is the (unique) non-trivial extension of $\HH_1$ by $\SSS$. 

\end{enumerate}
\end{lem}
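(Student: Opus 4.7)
The plan for part (1) is to exhibit a subquotient of $\RR$ that splits as $\SSS^{\oplus(r-1)} \oplus \GG'_n$; combining this with Lemma \ref{lem change} will then yield the desired factorization. Let $\FF_\bullet$ be the filtration corresponding to the given factorization, so that $\FF_{n+1}/\FF_{n-1}$ sits in the short exact sequence
$$0 \to \GG_n \to \FF_{n+1}/\FF_{n-1} \to \SSS^{\oplus r} \to 0$$
and is rigid by Lemma \ref{lem G rigid}. The hypothesis $\chi(\SSS, \GG_n) = -1$, together with $h^0(\SSS, \GG_n) = 0$ and the vanishing $h^2(\SSS, \GG_n) = 0$ inherited from the perfectness of the original factorization, forces $h^1(\SSS, \GG_n) = 1$, so $\GG'_n$ is well-defined.

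I would then observe that $h^1(\SSS^{\oplus r}, \GG_n) = r \ge 1$, so by Lemma \ref{lem ext non-trivial} the displayed extension is non-trivial. Since $\SSS$ is spherical it is simple, and Lemma \ref{lem extend GS}(1) identifies
$$\FF_{n+1}/\FF_{n-1} \cong \SSS^{\oplus(r-1)} \oplus \GG'_n.$$
This direct sum decomposition furnishes a factorization $(\SSS^{\oplus(r-1)}, \GG'_n)$ of $\FF_{n+1}/\FF_{n-1}$, and applying Lemma \ref{lem change} swaps the segment $(\GG_n, \SSS^{\oplus r})$ in the filtration of $\RR$ for $(\SSS^{\oplus(r-1)}, \GG'_n)$, producing the claimed factorization.

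To finish I would verify perfectness of the new factorization. The $\Ext$-vanishings between pairs not involving $\GG'_n$ are inherited from the original perfectness. For the new pairs, I would apply $\Hom(\GG_i, -)$, $\Hom(-, \GG_i)$, and $\Hom(-, \HH_j)$ to $0 \to \GG_n \to \GG'_n \to \SSS \to 0$; the resulting long exact sequences pinch each relevant $\Ext^p$-group involving $\GG'_n$ between two terms that vanish by the original perfectness (combined with Serre duality and the standing situation recorded in the Remark after the definition of perfectness that $\GG_i \otimes \omega_X \cong \GG_i$, which also implies $\GG'_n \otimes \omega_X \cong \GG'_n$). The one slightly subtle case $h^0(\SSS, \GG'_n) = 0$ is handled directly by Lemma \ref{lem extend GS}(1), which gives $h^0(\SSS, \GG'_n) = h^0(\SSS, \GG_n) = 0$.

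The main obstacle is the non-triviality of the extension in the second paragraph: a priori the class in $\Ext^1(\SSS^{\oplus r}, \GG_n) \cong \CC^r$ could vanish, but the rigidity of $\FF_{n+1}/\FF_{n-1}$ together with Lemma \ref{lem ext non-trivial} rules this out, after which Lemma \ref{lem extend GS}(1) packages the extension in the exact form needed. Part (2) will then follow by the symmetric argument, applying Lemma \ref{lem extend GS}(2) with the roles of the blocks before and after $\SSS^{\oplus r}$ reversed.
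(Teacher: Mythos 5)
Your proposal is correct and follows essentially the same route as the paper: identify the rigid subquotient $\FF_{n+1}/\FF_{n-1}$, use rigidity to force the extension by $\SSS^{\oplus r}$ to be non-trivial, invoke Lemma \ref{lem extend GS}(1) to split it as $\SSS^{\oplus(r-1)}\oplus\GG'_n$, and substitute via Lemma \ref{lem change}. Your perfectness check is in fact slightly more detailed than the paper's, which only records $h^0(\SSS,\GG'_n)=h^0(\SSS,\GG_n)=0$ and the dual vanishing.
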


\begin{proof}(1)
Since the factorization is perfect, $h^0 (\GG_n, \SSS)=h^2 (\SSS, \GG_n)=0$. Hence $\chi  (\SSS, \GG_n)=-1$ implies that $h^1 (\SSS, \GG_n)=1$. The unique non-trivial extension $\GG'_n$ of $\SSS$ by $\GG_n$ is well-defined. 
Note that the perfect factorization
$$
(\GG_1,\ldots, \GG_n, \SSS^{\oplus r}, \HH_1,\ldots, \HH_m).
$$
induces another 
perfect factorization
$$
(\GG_1,\ldots, \GG_{n-1}, \FF', \HH_1,\ldots, \HH_m).
$$
where $\FF'$ is an extension of $\SSS^{\oplus r}$ by $\GG_n$. By Lemma \ref{lem G rigid}, $\FF'$ is rigid, hence the extension is non-trivial. By Lemma \ref{lem extend GS}(1), $\FF'\cong \SSS^{\oplus (r-1)}\oplus \GG'_{n}$. Hence there exists a factorization
$$
(\GG_1,\ldots, \GG_{n-1}, \SSS^{\oplus r},\GG'_{n}, \HH_1,\ldots, \HH_m).
$$
It is easy to check that this factorization is perfect, since $h^0(\SSS,\GG'_{n})=h^0(\SSS,\GG_{n})=0$ by Lemma \ref{lem extend GS}(1) and $h^2(\GG'_{n},\SSS)=h^0(\SSS,\GG'_{n})=0$ by duality.

(2) can be proved similarly.
\end{proof}

\section{Torsion rigid sheaves supported on $(-2)$-curves}\label{section rigid -2}
\noindent In this section, we assume that $X$ is a smooth projective surface. All sheaves are considered to be coherent on $X.$ We will classify certain torsion rigid sheaves 
supported on $(-2)$-curves.
\begin{prop}\label{prop 12}
Let $C_1\cup C_2$ be a chain of two $(-2)$-curves and $\RR$ a torsion rigid sheaf with $c_1(\RR)=C_1+2C_2$. Then $\RR$ has one of the following perfect factorizations:
\begin{enumerate}
\item $(\OO_{C_2}(a_2), \OO_{C_1\cup C_2}(a_1, a_2));$
\item $( \OO_{C_1\cup C_2}(a_1, a_2), \OO_{C_2}(a_2-1));$
\item$ ( \OO_{C_1\cup C_2}(a_1, a_2), \OO_{C_2}(a_2-2)).$
\end{enumerate}
Here $a_1, a_2$ are integers.
\end{prop}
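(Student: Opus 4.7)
The plan is to apply Remark~\ref{remark restriction} with $Z'=C_2$ and $Z=C_1$, yielding a perfect factorization
\[
0\to \HH^0_{C_2}(\RR)\to \RR\to \OO_{C_1}(a_1)\to 0
\]
for some integer $a_1$, where $\HH^0_{C_2}(\RR)$ is a torsion rigid sheaf with $c_1=2C_2$ supported on $C_2$. Combining the Harder--Narasimhan filtration of Example~\ref{example HN filtration} with the slope-gap bound from Mukai's Lemma~\ref{lem GEG}(4) (which rules out HN slopes differing by more than $2$), I reduce $\HH^0_{C_2}(\RR)$ to one of three isomorphism types: (i) $\OO_{C_2}(b+1)\oplus \OO_{C_2}(b)$, (ii) $\OO_{2C_2}(b)$, or (iii) $\OO_{C_2}(b)^{\oplus 2}$ for some integer $b$; these will yield forms (1), (3), (2) of the proposition respectively.

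In Case~(i), take the top HN sub $\OO_{C_2}(b+1)\hookrightarrow \HH^0_{C_2}(\RR)\hookrightarrow \RR$; the quotient $\RR/\OO_{C_2}(b+1)$ fits into an extension $0\to \OO_{C_2}(b)\to \RR/\OO_{C_2}(b+1)\to \OO_{C_1}(a_1)\to 0$, which is rigid by Lemma~\ref{lem G rigid}. Since $h^1(\OO_{C_1}(a_1),\OO_{C_2}(b))=1\neq 0$, Lemma~\ref{lem ext non-trivial} forces non-triviality, giving $\RR/\OO_{C_2}(b+1)\cong \OO_{C_1\cup C_2}(a_1, b+1)$ and form~(1) with $a_2=b+1$. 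In Case~(ii), the same step (with the unique sub $\OO_{C_2}(b+2)\subset \OO_{2C_2}(b)$) presents $\RR$ as an extension $0\to \OO_{C_2}(b+2)\to \RR\to \OO_{C_1\cup C_2}(a_1, b+1)\to 0$. Composing with the canonical quotient $\OO_{C_1\cup C_2}(a_1, b+1)\twoheadrightarrow \OO_{C_2}(b+1)$ gives a surjection $\RR\twoheadrightarrow \OO_{C_2}(b+1)$, and applying $\Hom(-,\OO_{C_2}(b+1))$ to both sequences yields $\Hom(\RR,\OO_{C_2}(b+1))=1$ and then $\Hom(K,\OO_{C_2}(b+1))=0$ for the kernel $K$, so $(K, \OO_{C_2}(b+1))$ is perfect. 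The kernel then sits in $0\to \OO_{C_2}(b+2)\to K\to \OO_{C_1}(a_1-1)\to 0$, and by the same rigidity--non-triviality argument $K\cong \OO_{C_1\cup C_2}(a_1-1, b+3)$, yielding form~(3) with $a_2=b+3$.

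Case~(iii) is the most delicate. The rigidity of $\RR$ forces the defining extension class $\eta\in \Ext^1(\OO_{C_1}(a_1),\OO_{C_2}(b)^{\oplus 2})\cong \CC^2$ to be non-zero (otherwise $\RR\cong \OO_{C_2}(b)^{\oplus 2}\oplus \OO_{C_1}(a_1)$ would have $h^1(\RR,\RR)\geq 4$), so the connecting map $\Hom(\OO_{C_2}(b)^{\oplus 2},\OO_{C_2}(b))\to \Ext^1(\OO_{C_1}(a_1),\OO_{C_2}(b))$, $\phi\mapsto \phi\circ\eta$, is a non-zero linear map $\CC^2\to \CC$, whence $\Hom(\RR,\OO_{C_2}(b))=1$. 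Let $\RR\twoheadrightarrow \OO_{C_2}(b)$ be the resulting surjection with kernel $K$; the same $\Hom$-computation gives $\Hom(K,\OO_{C_2}(b))=0$, so $(K, \OO_{C_2}(b))$ is perfect and Lemma~\ref{lem G rigid} makes $K$ a rigid torsion sheaf with $c_1=C_1+C_2$. Applying Remark~\ref{remark restriction} to $K$ then shows $K$ is a non-trivial extension of $\OO_{C_1}(a_1')$ by $\OO_{C_2}(a_2'-1)$ (the direct sum $\OO_{C_1}(\ast)\oplus \OO_{C_2}(\ast)$ being non-rigid), hence a line bundle $\OO_{C_1\cup C_2}(a_1', a_2')$; the inclusion $\HH^0_{C_2}(K)\hookrightarrow \HH^0_{C_2}(\RR)=\OO_{C_2}(b)^{\oplus 2}$ together with perfectness pins down $a_2'=b+1$, yielding form~(2) with $a_2=b+1$. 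The main obstacle is Case~(iii): converting the rigidity of $\RR$ into the precise $\Hom$-dimension count, and identifying $K$ as a line bundle on $C_1\cup C_2$.
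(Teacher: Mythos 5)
Your proof is correct, but it takes a genuinely different route from the paper's. The paper restricts $\RR$ to $C_2$ the other way around: it takes the quotient $\RR_{C_2}$ with $c_1=2C_2$ and the subsheaf $\OO_{C_1}(a_1-1)$, classifies $\RR_{C_2}$ by its Harder--Narasimhan factors, and then invokes Lemma \ref{lem GSH} to absorb one spherical factor $\OO_{C_2}(a_2)$ into the $C_1$-line bundle, producing the line bundle on the chain and the perfect factorization in one stroke. You instead put the $C_2$-part on the sub side, $\HH^0_{C_2}(\RR)$, pin down its three isomorphism types (your slope-gap reduction via Lemma \ref{lem GEG}(4) is the same computation the paper does for $\RR_2$), and then build each factorization by hand: identifying explicit surjections $\RR\twoheadrightarrow\OO_{C_2}(\cdot)$ and their kernels via connecting-map computations. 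I checked the delicate points: in Case (ii) the chain $\RR\twoheadrightarrow\OO_{C_1\cup C_2}(a_1,b+1)\twoheadrightarrow\OO_{C_2}(b+1)$ and the identification $K\cong\OO_{C_1\cup C_2}(a_1-1,b+3)$ are right; in Case (iii) the connecting map $\Hom(\OO_{C_2}(b)^{\oplus 2},\OO_{C_2}(b))\to\Ext^1(\OO_{C_1}(a_1),\OO_{C_2}(b))$ is indeed nonzero, giving $\dim\Hom(\RR,\OO_{C_2}(b))=1$, and your pinning of $a_2'=b+1$ (upper bound from $\HH^0_{C_2}(K)\hookrightarrow\OO_{C_2}(b)^{\oplus 2}$, lower bound from $\Hom(K,\OO_{C_2}(b))=0$) closes the argument; your Case (iii) is essentially a hand-made replacement for the paper's Lemma \ref{lem extend GS}, where the $\GL(2,\CC)$-action on $\Ext^1\cong\CC^2$ does the same normalization for free. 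The only omission is the (one-line) check that the final factorization in Case (i), $(\OO_{C_2}(b+1),\OO_{C_1\cup C_2}(a_1,b+1))$, is perfect, i.e.\ $\Hom(\OO_{C_2}(b+1),\OO_{C_1\cup C_2}(a_1,b+1))=\Hom(\OO_{C_2}(b+1),\OO_{C_2}(b))=0$. The trade-off: the paper's Lemma \ref{lem GSH} machinery is reused verbatim for the longer chains in Propositions \ref{prop 123}--\ref{cor 123321}, so its proof scales, whereas your direct construction is more self-contained here but would become unwieldy for $c_1=C_1+2C_2+3C_3$ and beyond.
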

\begin{proof}
Taking the restriction to $C_2$, we have an exact sequence
$$
0\to \RR_1\to \RR \to \RR_2\to 0.
$$
By Remark \ref{remark restriction}, $\RR_1$ and $\RR_2$ are rigid. Note that $c_1(\RR_1)=C_1$ and $c_1(\RR_2)=2C_2$. Hence $\RR_1=\OO_{C_1}(a_1-1)$ is a line bundle on $C_1$ for some integer $a_1$, and $\RR_2$ has a perfect factorization induced by Harder--Narasimhan filtration, which is 

{\bf Case 1.} $(\OO_{C_2}(a_2)^{\oplus 2})$ for some integer $a_2$, or

{\bf Case 2.} $(\OO_{C_2}(a_2), \OO_{C_2}(b_2))$, for integers $a_2>b_2$.

In Case 1, $\RR$ has a perfect factorization
$$(\OO_{C_1}(a_1-1), \OO_{C_2}(a_2)^{\oplus 2}),$$
for which we can apply Lemma \ref{lem GSH} to get a new  perfect factorization
$$(\OO_{C_2}(a_2), \OO_{C_1\cup C_2}(a_1, a_2)).$$
This gives (1).

In Case 2, since $\RR_2$ is rigid, by Lemma \ref{lem GEG}(4), we have
$$h^1( \OO_{C_2}(b_2),\OO_{C_2}(a_2))\leq 1,$$
which implies that $a_2\leq b_2+2$, that is, $b_2=a_2-1$ or $a_2-2$.
In this case,
$\RR$ has a perfect factorization
$$(\OO_{C_1}(a_1-1), \OO_{C_2}(a_2),\OO_{C_2}(b_2)),$$
for which we can apply Lemma \ref{lem GSH} to get a new  perfect factorization
$$( \OO_{C_1\cup C_2}(a_1, a_2),\OO_{C_2}(b_2)).$$
This gives (2) and (3).
\end{proof}

\begin{prop}\label{prop 123}
Let $C_1\cup C_2\cup C_3$ be a chain of three $(-2)$-curves and $\RR$ a torsion rigid sheaf with $c_1(\RR)=C_1+2C_2+3C_3$. Then $\RR$ has one of the following perfect factorizations:
\begin{enumerate}
\item[(1-1)] $(\OO_{C_3}(a_3),\OO_{C_2\cup C_3}(a_2, a_3), \OO_{C_1\cup C_2 \cup C_3}(a_1, a_2, a_3));$
\item[(1-2)] $(\OO_{C_3}(a_3),\OO_{C_1\cup C_2 \cup C_3}(a_1, a_2, a_3), \OO_{C_2\cup C_3}(a_2-1, a_3));$
\item[(1-3)] $(\OO_{C_3}(a_3),\OO_{C_1\cup C_2 \cup C_3}(a_1, a_2, a_3), \OO_{C_2\cup C_3}(a_2-2, a_3));$

\item[(2-1)] $(\OO_{C_2\cup C_3}(a_2, a_3), \OO_{C_1\cup C_2 \cup C_3}(a_1, a_2, a_3), \OO_{C_3}(b_3));$
\item[(2-2)] $(\OO_{C_1\cup C_2 \cup C_3}(a_1, a_2, a_3), \OO_{C_2\cup C_3}(a_2-1, a_3),\OO_{C_3}(b_3));$
\item[(2-3)] $(\OO_{C_1\cup C_2 \cup C_3}(a_1, a_2, a_3), \OO_{C_2\cup C_3}(a_2-2, a_3),\OO_{C_3}(b_3));$

\item[(3-1)] $(\OO_{C_1\cup C_2 \cup C_3}(a_1, a_2, a_3),\OO_{C_3}(b_3), \OO_{C_2\cup C_3}(a_2, b_3));$
\item[(3-2)] $(\OO_{C_2\cup C_3}(a_2, a_3),\OO_{C_3}(b_3), \OO_{C_1\cup C_2 \cup C_3}(a_1, a_2+1, b_3));$
\item[(3-3)] $(\OO_{C_1\cup C_2 \cup C_3}(a_1, a_2, a_3),\OO_{C_3}(b_3), \OO_{C_2\cup C_3}(a_2-1, b_3));$
\item[(3-4)] $(\OO_{C_1\cup 2C_2 \cup C_3}(a_1, a_2, a_3)\oplus \OO_{C_3}(a_3-1)^{\oplus 2});$

\item[(4-1)] $(\OO_{C_1\cup C_2 \cup C_3}(a_1, a_2, a_3), \OO_{C_2\cup C_3}(a_2, b_3),\OO_{C_3}(c_3));$

\item[(4-2)] $(\OO_{C_2\cup C_3}(a_2, a_3), \OO_{C_1\cup C_2 \cup C_3}(a_1, a_2+1, b_3), \OO_{C_3}(c_3));$
\item[(4-3)] $(\OO_{C_1\cup C_2 \cup C_3}(a_1, a_2, b_3+1), \OO_{C_2\cup C_3}(a_2-1, b_3),\OO_{C_3}(c_3));$
\item[(4-4)] $(\OO_{C_1\cup 2C_2 \cup C_3}(a_1, a_2, b_3+1)\oplus \OO_{C_3}(b_3), \OO_{C_3}(c_3)).$
\end{enumerate}
Here $a_i, b_i, c_i$ are integers and $a_3>b_3>c_3$.
\end{prop}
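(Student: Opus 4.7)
The proof proceeds by the same strategy as for Proposition \ref{prop 12}: restrict $\RR$ to $C_3$ and invoke Proposition \ref{prop 12} on what remains. We obtain a short exact sequence
$$
0 \to \RR' \to \RR \to \RR_3 \to 0,
$$
in which, by Remark \ref{remark restriction}, the subsheaf $\RR'$ and the quotient $\RR_3$ are both torsion rigid, with $c_1(\RR')=C_1+2C_2$, $c_1(\RR_3)=3C_3$, and $h^0(\RR',\RR_3)=h^2(\RR_3,\RR')=0$. Consequently, the concatenation of any perfect factorization of $\RR'$ with any perfect factorization of $\RR_3$ is a perfect factorization of $\RR$, and this is the starting point of the analysis.

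Proposition \ref{prop 12} supplies three possibilities for a perfect factorization of $\RR'$. The quotient $\RR_3$ is pure one-dimensional on $3C_3$, so by Example \ref{example HN filtration} its Harder--Narasimhan filtration gives a perfect factorization of the form $(\OO_{C_3}(a_3)^{\oplus r_1},\ldots,\OO_{C_3}(a_3^{(k)})^{\oplus r_k})$ with $r_1+\cdots+r_k=3$ and strictly decreasing slopes. As in the proof of Proposition \ref{prop 12}, rigidity of each successive quotient together with Lemma \ref{lem GEG}(4) forces consecutive slopes to differ by at most two. Combined with the rank bound, $\RR_3$ falls into one of four shapes: (A) $\OO_{C_3}(a_3)^{\oplus 3}$; (B) $(\OO_{C_3}(a_3)^{\oplus 2},\OO_{C_3}(b_3))$; (C) $(\OO_{C_3}(a_3),\OO_{C_3}(b_3)^{\oplus 2})$; (D) $(\OO_{C_3}(a_3),\OO_{C_3}(b_3),\OO_{C_3}(c_3))$, with strictly decreasing slopes in (B)--(D).

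For each pairing of an $\RR'$-factorization with an $\RR_3$-factorization, I would concatenate to get a perfect factorization of $\RR$ and then apply Lemmas \ref{lem change} and \ref{lem GSH} to move the $\OO_{C_3}$-summands into their final positions. The key step is Lemma \ref{lem GSH}: whenever an $\OO_{C_3}(a_3)$-summand is adjacent to a factor $\GG$ whose restriction to $C_3$ has the matching degree so that $\chi(\OO_{C_3}(a_3),\GG)=-1$ (checked by the torsion Riemann--Roch formula together with the intersection numbers $C_2\cdot C_3=1$ and $C_1\cdot C_3=0$), the summand is absorbed into a non-split extension producing an $\OO_{C_2\cup C_3}$- or $\OO_{C_1\cup C_2\cup C_3}$-factor of prescribed multidegree. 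This absorption process accounts for the grouping into cases (1)--(4) of the statement, which are distinguished by how many $\OO_{C_3}$-factors remain unabsorbed and on which side of the $\RR'$-part they sit. The three shapes for $\RR'$ together with the four shapes for $\RR_3$, after canonicalization of the order, yield exactly the fourteen listed cases.

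The main subtlety is the appearance of the non-reduced factor $\OO_{C_1\cup 2C_2\cup C_3}$ in cases (3-4) and (4-4). This occurs when two $\OO_{C_3}(a_3)$-summands of the same slope must be absorbed together with a factorization of $\RR'$ of shape (1), in which an $\OO_{C_2}$-summand of matching degree is present; the resulting non-split self-extension of $\OO_{C_2}$ is precisely $\OO_{2C_2}$ (up to twist), and the further absorption across $C_1$ and $C_3$ produces $\OO_{C_1\cup 2C_2\cup C_3}$. Beyond this geometric input, the remaining work is bookkeeping: at each intermediate step one must verify perfectness, check that the numerical hypotheses of Lemma \ref{lem GSH} hold, and match the resulting degrees to those in the statement. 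I would present the enumeration as a table indexed first by the shape of $\RR_3$ and then by the shape of $\RR'$, so that the verification can be carried out uniformly.
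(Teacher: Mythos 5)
Your overall strategy coincides with the paper's: restrict to $C_3$, feed the $C_1+2C_2$ part into Proposition \ref{prop 12} and the $3C_3$ part into the Harder--Narasimhan factorization of Example \ref{example HN filtration}, concatenate (which is indeed perfect, by the support argument you indicate), and then rearrange with Lemmas \ref{lem change} and \ref{lem GSH}. The opening reductions are correct. But the proposal stops being a proof exactly where the proposition becomes nontrivial, and the one delicate point you do try to describe is described incorrectly. First, a counting symptom: $3$ shapes for $\RR'$ times $4$ shapes for $\RR_3$ gives $12$ combinations, not $14$; the two extra cases (3-4) and (4-4) arise from a further dichotomy ($a_3=b_3+1$ versus $a_3=b_3+2$) inside the combination of shape (3) of Proposition \ref{prop 12} with shapes (C) and (D) of $\RR_3$, and nothing in your outline produces this split.

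Second, the origin of the non-reduced factor is misidentified. It does not come from shape (1) of $\RR'$ (which contains $\OO_{C_2}(a_2)$ of \emph{matching} degree), nor from a ``non-split self-extension of $\OO_{C_2}$'' triggered by two equal-slope $\OO_{C_3}$-summands. It comes from shape (3), $\RR'\equiv(\OO_{C_1\cup C_2}(a_1,a_2),\OO_{C_2}(a_2-2))$, where the degree gap on $C_2$ is $2$: after Lemma \ref{lem GSH} absorbs $\OO_{C_3}(a_3)$ into $\OO_{C_2}(a_2-2)$, one computes $h^1(\OO_{C_2\cup C_3}(a_2-1,a_3),\OO_{C_1\cup C_2}(a_1,a_2))=1$ (via $\chi=0$ and $h^0=1$), so by rigidity the corresponding sub-extension is forced to be the non-split one, namely the line bundle $\OO_{C_1\cup 2C_2\cup C_3}(a_1+1,a_2-1,a_3)$ on the non-reduced scheme. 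At this point your ``move the summands into final position'' recipe breaks down: the factors cannot simply be exchanged (the relevant $h^1$ is $1$, not $0$), and to continue one must re-factor $\OO_{C_1\cup 2C_2\cup C_3}(a_1+1,a_2-1,a_3)$ as an extension of $\OO_{C_2}(a_2-1)$ by $\OO_{C_1\cup C_2\cup C_3}(a_1,a_2+1,a_3-1)$ when $a_3>b_3+1$ (leading to (3-3)/(4-3)), or verify the vanishing $h^1(\OO_{C_3}(b_3),\OO_{C_1\cup 2C_2\cup C_3}(a_1+1,a_2-1,a_3))=0$ when $a_3=b_3+1$ so that the remaining $\OO_{C_3}$-factors split off as direct summands (leading to (3-4)/(4-4)). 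Without these steps the list in the statement cannot be derived, so the case analysis you defer to ``bookkeeping'' contains the actual mathematical content and, as sketched, would not go through.
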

\begin{proof}
Taking the restriction to $C_3$, we have an exact sequence
$$
0\to \RR_{12}\to \RR \to \RR_3\to 0.
$$
By Remark \ref{remark restriction}, $\RR_{12}$ and $\RR_{3}$ are rigid. Note that $c_1(\RR_{12})=C_1+2C_2$ and $c_1(\RR_3)=3C_3$. $\RR_3$ has a perfect factorization induced by Harder--Narasimhan filtration, we have 4 cases: 

{\bf Case 1.} $(\OO_{C_3}(a_3)^{\oplus 3})$ for some integer $a_3$;

{\bf Case 2.} $(\OO_{C_3}(a_3)^{\oplus 2}, \OO_{C_3}(b_3))$, for integers $a_3>b_3$;

{\bf Case 3.} $(\OO_{C_3}(a_3), \OO_{C_3}(b_3)^{\oplus 2})$, for integers $a_3>b_3$;

{\bf Case 4.} $(\OO_{C_3}(a_3), \OO_{C_3}(b_3),\OO_{C_3}(c_3))$, for integers $a_3>b_3>c_3$.

Each case can be divided in to 3 subcases according to the perfect factorization $\RR_{12}\equiv (\GG_1, \GG_2)$ in Proposition \ref{prop 12}.

In Case 1, applying Lemma \ref{lem GSH} twice with $\SSS=\OO_{C_3}(a_3)$ to the perfect factorization $\RR\equiv (\GG_1, \GG_2, \OO_{C_3}(a_3)^{\oplus 3}),$ we get a new perfect factorization, which gives (1-1), (1-2), or (1-3) by changing $a_2$ appropriately.

In Case 2, applying Lemma \ref{lem GSH} twice with $\SSS=\OO_{C_3}(a_3)$  to the perfect factorization $\RR\equiv (\GG_1, \GG_2, \OO_{C_3}(a_3)^{\oplus 2}, \OO_{C_3}(b_3)),$ we get a new perfect factorization, which gives (2-1), (2-2), or (2-3) by changing $a_2$ appropriately.

In Case 3, we have 3 subcases:

{\bf Subcase 3.1.} $\RR_{12}\equiv (\OO_{C_2}(a_2), \OO_{C_1\cup C_2}(a_1, a_2));$

{\bf Subcase 3.2.} $\RR_{12}\equiv ( \OO_{C_1\cup C_2}(a_1, a_2), \OO_{C_2}(a_2-1));$

{\bf Subcase 3.3.} $\RR_{12}\equiv ( \OO_{C_1\cup C_2}(a_1, a_2), \OO_{C_2}(a_2-2)).$

In Subcase 3.1, $\RR$ has a perfect factorization 
$$(\OO_{C_2}(a_2), \OO_{C_1\cup C_2}(a_1, a_2), \OO_{C_3}(a_3), \OO_{C_3}(b_3)^{\oplus 2}).$$
Applying Lemma \ref{lem GSH}, we get a new perfect factorization
$$(\OO_{C_2}(a_2), \OO_{C_1\cup C_2\cup C_3 }(a_1, a_2+1, a_3), \OO_{C_3}(b_3)^{\oplus 2}).$$
Note that Hom's and $\chi$ between the first two factors are trivial, we get 
$$
h^1( \OO_{C_1\cup C_2\cup C_3 }(a_1, a_2+1, a_3), \OO_{C_2}(a_2))=0,
$$ 
and we can exchange the first two factors to get a new perfect factorization
$$(\OO_{C_1\cup C_2\cup C_3 }(a_1, a_2+1, a_3), \OO_{C_2}(a_2), \OO_{C_3}(b_3)^{\oplus 2}).$$
Applying Lemma \ref{lem GSH}(1), we get a new perfect factorization
$$(\OO_{C_1\cup C_2\cup C_3 }(a_1, a_2+1, a_3), \OO_{C_3}(b_3),  \OO_{C_2\cup C_3}(a_2+1, b_3)).$$
This gives (3-1) by changing $a_2$ appropriately.

In Subcase 3.2, $\RR$ has a perfect factorization 
$$( \OO_{C_1\cup C_2}(a_1, a_2), \OO_{C_2}(a_2-1), \OO_{C_3}(a_3), \OO_{C_3}(b_3)^{\oplus 2}).$$
Applying Lemma \ref{lem GSH}, we get a new perfect factorization
$$( \OO_{C_1\cup C_2}(a_1, a_2), \OO_{C_2\cup C_3}(a_2, a_3), \OO_{C_3}(b_3)^{\oplus 2}).$$
Note that Hom's and $\chi$ between the first two factors are trivial, we get 
$$
h^1(\OO_{C_2\cup C_3}(a_2, a_3), \OO_{C_1\cup C_2}(a_1, a_2))=0,
$$ 
and we can exchange the first two factors to get a new perfect factorization
$$(\OO_{C_2\cup C_3}(a_2, a_3), \OO_{C_1\cup C_2}(a_1, a_2), \OO_{C_3}(b_3)^{\oplus 2}).$$
Applying Lemma \ref{lem GSH}(1), we get a new perfect factorization
$$(\OO_{C_2\cup C_3}(a_2, a_3), \OO_{C_3}(b_3),  \OO_{C_1\cup C_2\cup C_3}(a_1, a_2+1, b_3)).$$
This gives (3-2).

In Subcase 3.3, $\RR$ has a perfect factorization 
$$( \OO_{C_1\cup C_2}(a_1, a_2), \OO_{C_2}(a_2-2), \OO_{C_3}(a_3), \OO_{C_3}(b_3)^{\oplus 2}).$$
Applying Lemma \ref{lem GSH}, we get a new perfect factorization
$$( \OO_{C_1\cup C_2}(a_1, a_2), \OO_{C_2\cup C_3}(a_2-1, a_3), \OO_{C_3}(b_3)^{\oplus 2}).$$
Note that  
$$h^1(\OO_{C_2\cup C_3}(a_2-1, a_3), \OO_{C_1\cup C_2}(a_1, a_2))=1$$
since
\begin{align*}
\chi ( \OO_{C_2\cup C_3}(a_2-1, a_3), \OO_{C_1\cup C_2}(a_1, a_2)){}&=0,\\
h^0 ( \OO_{C_2\cup C_3}(a_2-1, a_3), \OO_{C_1\cup C_2}(a_1, a_2)){}&=1,\\
h^0( \OO_{C_1\cup C_2}(a_1, a_2), \OO_{C_2\cup C_3}(a_2-1, a_3)){}&=0,
\end{align*}
and the unique non-trivial extension is $\OO_{C_1\cup 2C_2\cup C_3}(a_1+1, a_2-1, a_3)$, we get a new perfect factorization
$$( \OO_{C_1\cup 2C_2\cup C_3}(a_1+1, a_2-1, a_3), \OO_{C_3}(b_3)^{\oplus 2}).$$
Note that $\OO_{C_1\cup 2C_2\cup C_3}(a_1+1, a_2-1, a_3)$ can be also viewed as the extension of $\OO_{C_2}(a_2-1)$ by $\OO_{C_1\cup C_2\cup C_3}(a_1, a_2+1, a_3-1)$.

Now if $a_3>b_3+1$, then $\RR$ has a new factorization
$$
( \OO_{C_1\cup C_2\cup C_3}(a_1, a_2+1, a_3-1), \OO_{C_2}(a_2-1), \OO_{C_3}(b_3)^{\oplus 2}),
$$
which is perfect by checking Hom's. Applying Lemma \ref{lem GSH}, we get a new perfect factorization
$$
( \OO_{C_1\cup C_2\cup C_3}(a_1, a_2+1, a_3-1), \OO_{C_3}(b_3), \OO_{C_2\cup C_3}(a_2, b_3)).
$$
This gives (3-3) by changing $a_2, a_3$ appropriately.

If $a_3=b_3+1$, then 
$$
h^1( \OO_{C_3}(b_3),\OO_{C_1\cup 2C_2\cup C_3}(a_1+1, a_2-1, a_3))=0
$$
since 
\begin{align*}
\chi ( \OO_{C_3}(b_3),\OO_{C_1\cup 2C_2\cup C_3}(a_1+1, a_2-1, a_3)){}&=0,\\
h^0 ( \OO_{C_3}(b_3),\OO_{C_1\cup 2C_2\cup C_3}(a_1+1, a_2-1, a_3)){}&=0,\\
h^0( \OO_{C_1\cup 2C_2\cup C_3}(a_1+1, a_2-1, a_3), \OO_{C_3}(b_3)){}&=0.
\end{align*}
Hence 
$$
\RR\cong \OO_{C_1\cup 2C_2 \cup C_3}(a_1+1, a_2-1, a_3)\oplus \OO_{C_3}(a_3-1)^{\oplus 2},
$$
which gives (3-4) by changing $a_1, a_2$ appropriately.

Finally we consider Case 4. Again we have 3 subcases:

{\bf Subcase 4.1.} $\RR_{12}\equiv (\OO_{C_2}(a_2), \OO_{C_1\cup C_2}(a_1, a_2));$

{\bf Subcase 4.2.} $\RR_{12}\equiv ( \OO_{C_1\cup C_2}(a_1, a_2), \OO_{C_2}(a_2-1));$

{\bf Subcase 4.3.} $\RR_{12}\equiv ( \OO_{C_1\cup C_2}(a_1, a_2), \OO_{C_2}(a_2-2)).$

In Subcase 4.1, arguing as Subcase 3.1, we have
\begin{align*}
\RR
\equiv {}&(\OO_{C_2}(a_2), \OO_{C_1\cup C_2}(a_1, a_2), \OO_{C_3}(a_3), \OO_{C_3}(b_3), \OO_{C_3}(c_3))\\
\equiv {}&(\OO_{C_2}(a_2), \OO_{C_1\cup C_2\cup C_3 }(a_1, a_2+1, a_3),  \OO_{C_3}(b_3), \OO_{C_3}(c_3))\\
\equiv {}&(\OO_{C_1\cup C_2\cup C_3 }(a_1, a_2+1, a_3), \OO_{C_2}(a_2),  \OO_{C_3}(b_3), \OO_{C_3}(c_3))\\
\equiv {}&(\OO_{C_1\cup C_2\cup C_3 }(a_1, a_2+1, a_3),\OO_{C_2\cup C_3}(a_2+1, b_3), \OO_{C_3}(c_3)).
\end{align*}
This gives (4-1) by changing $a_2$ appropriately.

In Subcase 4.2, arguing as Subcase 3.2, we have perfect factorizations
\begin{align*}
\RR
\equiv {}&(\OO_{C_1\cup C_2}(a_1, a_2), \OO_{C_2}(a_2-1), \OO_{C_3}(a_3), \OO_{C_3}(b_3), \OO_{C_3}(c_3))\\
\equiv {}&(\OO_{C_1\cup C_2}(a_1, a_2), \OO_{C_2\cup C_3}(a_2, a_3),  \OO_{C_3}(b_3), \OO_{C_3}(c_3))\\
\equiv {}&(\OO_{C_2\cup C_3}(a_2, a_3), \OO_{C_1\cup C_2}(a_1, a_2),  \OO_{C_3}(b_3), \OO_{C_3}(c_3))\\
\equiv {}&(\OO_{C_2\cup C_3}(a_2, a_3),\OO_{C_1\cup C_2\cup C_3}(a_1, a_2+1, b_3),   \OO_{C_3}(c_3)).
\end{align*}
This gives (4-2).

In Subcase 4.3, arguing as Subcase 3.3, we have perfect factorizations
 \begin{align*}
\RR
\equiv {}&(\OO_{C_1\cup C_2}(a_1, a_2), \OO_{C_2}(a_2-2), \OO_{C_3}(a_3), \OO_{C_3}(b_3), \OO_{C_3}(c_3))\\
\equiv {}&(\OO_{C_1\cup C_2}(a_1, a_2), \OO_{C_2\cup C_3}(a_2-1, a_3),  \OO_{C_3}(b_3), \OO_{C_3}(c_3))\\
\equiv {}&(\OO_{C_1\cup 2C_2\cup C_3}(a_1+1, a_2-1, a_3),  \OO_{C_3}(b_3), \OO_{C_3}(c_3)).
\end{align*}

If $a_3>b_3+1$, then $a_3=b_3+2$ in this case since the extension of $\OO_{C_3}(b_3)$ by $\OO_{C_3}(a_3)$ is rigid (see Case 2 of proof of Proposition \ref{prop 12}). Arguing as Subcase 3.3, we have perfect factorizations
  \begin{align*}
\RR\equiv {}&( \OO_{C_1\cup C_2\cup C_3}(a_1, a_2+1, a_3-1), \OO_{C_2}(a_2-1), \OO_{C_3}(b_3), \OO_{C_3}(c_3))\\
\equiv {}&( \OO_{C_1\cup C_2\cup C_3}(a_1, a_2+1, b_3+1), \OO_{C_2\cup C_3}(a_2, b_3), \OO_{C_3}(c_3)).
\end{align*}
This gives (4-3) by changing $a_2$ appropriately.

If $a_3=b_3+1$, then as Subcase 3.3, we have a perfect factorization
  \begin{align*}
\RR\equiv {}&( \OO_{C_1\cup 2C_2\cup C_3}(a_1+1, a_2-1, b_3+1)\oplus \OO_{C_3}(b_3), \OO_{C_3}(c_3))
\end{align*}
since 
$$
h^1(  \OO_{C_3}(b_3),\OO_{C_1\cup 2C_2\cup C_3}(a_1+1, a_2-1, b_3+1))=0.
$$
This gives (4-4) by changing $a_1, a_2$ appropriately.
 \end{proof}

We get the following corollary directly.
\begin{cor}\label{cor 123}
Let $C_1\cup C_2\cup C_3$ be a chain of three $(-2)$-curves and $\RR$ a torsion rigid sheaf with $c_1(\RR)=C_1+2C_2+3C_3$. Then one of the following holds
\begin{enumerate}
\item  $\RR$ has a perfect factorization $(\GG, \LL)$
where $\LL$ is a line bundle supported on the chain $C_i\cup \cdots \cup C_3$ for some $1\leq i \leq 3$, or
\item
$\RR\cong \OO_{C_1\cup 2C_2 \cup C_3}(a_1, a_2, a_3)\oplus \OO_{C_3}(a_3-1)^{\oplus 2}$ for some integers  $a_1, a_2, a_3$.
\end{enumerate}
\end{cor}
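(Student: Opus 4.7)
The plan is to derive this corollary as an immediate case-by-case reading of Proposition \ref{prop 123}. Since that proposition exhausts the possibilities for $\RR$ via fourteen explicit perfect three-term factorizations, I would simply go down the list and check, in each case, whether the rightmost factor is already a line bundle supported on a terminal sub-chain $C_i\cup\cdots\cup C_3$, or whether the factorization instead exhibits $\RR$ as the direct sum described in alternative (2).

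Concretely, I would observe the following bookkeeping: cases (1-1) and (3-2) have last factor a line bundle on the full chain $C_1\cup C_2\cup C_3$ (so $i=1$); cases (1-2), (1-3), (3-1), (3-3) have last factor a line bundle on $C_2\cup C_3$ (so $i=2$); and cases (2-1)--(2-3) together with (4-1)--(4-4) have last factor $\OO_{C_3}(\cdot)$ (so $i=3$). Case (3-4) is the only exception, and it produces exactly the direct sum $\OO_{C_1\cup 2C_2\cup C_3}(a_1,a_2,a_3)\oplus\OO_{C_3}(a_3-1)^{\oplus 2}$ appearing in alternative (2).

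In each of the thirteen non-exceptional cases, the three-term perfect factorization of $\RR$ corresponds to a filtration $0\subset \FF_1\subset \FF_2\subset \FF_3=\RR$; I would set $\GG:=\FF_2$ and $\LL:=\RR/\FF_2$, which is the desired line bundle on a terminal sub-chain. The only thing to verify is that the induced two-term factorization $(\GG,\LL)$ is perfect, i.e.\ that $h^0(\GG,\LL)=h^2(\LL,\GG)=0$. This follows from the short exact sequence $0\to\GG_1\to\GG\to\GG_2\to 0$ together with the vanishings $h^0(\GG_i,\LL)=h^2(\LL,\GG_i)=0$ provided by perfectness of the original three-term factorization (and duality, since all the sheaves involved are supported on $(-2)$-curves so that $\omega_X$ acts trivially on them).

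There is no real obstacle here: Proposition \ref{prop 123} has been organized precisely so that, apart from the single split case (3-4), each classification outcome already ends with a line bundle on a terminal sub-chain, and the corollary is essentially a repackaging of that data. The only thing worth writing out carefully would be the routine verification that shortening a perfect three-term factorization by absorbing its first two factors into one subsheaf preserves perfectness, which is the one-line cohomological computation indicated above.
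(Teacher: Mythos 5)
Your proposal is correct and matches the paper's intent exactly: the paper states that the corollary follows ``directly'' from Proposition \ref{prop 123}, and your case-by-case bookkeeping (thirteen cases ending in a line bundle on a terminal sub-chain $C_i\cup\cdots\cup C_3$, with (3-4) giving the split case) is precisely that direct reading, while the collapsing step you flag is already supplied by the paper's lemma that $(\FF_{n-1},\GG_n)$ is a perfect factorization whenever $(\GG_1,\ldots,\GG_n)$ is.
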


\begin{cor}\label{cor 12321}
Let $C_1\cup \cdots \cup C_5$ be a chain of five $(-2)$-curves and $\RR$ a torsion rigid sheaf with $c_1(\RR)=C_1+2C_2+3C_3+2C_4+C_5$. Then $\RR$ has a perfect factorization $(\GG, \LL)$
where $\LL$ is a line bundle supported on either the chain $C_i\cup \cdots \cup C_3$ for some $1\leq i \leq 5$, or  the chain $C_1\cup \cdots \cup C_5$.
\end{cor}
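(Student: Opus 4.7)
The strategy is to reduce to Corollary~\ref{cor 123} via the restriction construction from Remark~\ref{remark restriction}. Taking $Z=C_3\cup C_4\cup C_5$ and $Z'=C_1\cup C_2$, I obtain a short exact sequence
$$0 \to \RR' \to \RR \to \RR'' \to 0,$$
where $\RR'$ is a torsion rigid sheaf on $C_1\cup C_2$ with $c_1(\RR') = C_1 + 2C_2$ and $\RR''$ is a torsion rigid sheaf on $C_3\cup C_4\cup C_5$ with $c_1(\RR'') = 3C_3 + 2C_4 + C_5$. Both are pure one-dimensional with supports meeting only at the single node $C_2\cap C_3$, so any morphism between them has zero-dimensional image in a pure one-dimensional target, giving $\Hom(\RR',\RR'') = \Hom(\RR'',\RR') = 0$; since all sheaves involved are supported on $(-2)$-curves, where $\omega_X$ acts trivially, Serre duality also yields the matching $\Ext^2$ vanishings.

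Now I apply Corollary~\ref{cor 123} to $\RR''$, viewing the chain in the reversed order $C_5\cup C_4\cup C_3$ so that $C_3$ plays the role of the coefficient-$3$ endpoint. In the first alternative, $\RR''$ has a perfect factorization $(\GG'',\LL)$ with $\LL$ a line bundle on a sub-chain of $C_3\cup C_4\cup C_5$ ending at $C_3$, namely on $C_3$, on $C_3\cup C_4$, or on $C_3\cup C_4\cup C_5$. Pulling back through the exact sequence produces a factorization $\RR \equiv (\RR',\GG'',\LL)$, which is perfect by the $\Hom$/$\Ext^2$ vanishings above combined with the assumed perfectness of $(\GG'',\LL)$. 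Grouping the first two factors into a single $\GG$ then gives the desired perfect factorization $(\GG,\LL)$ of $\RR$.

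In the exceptional second alternative, $\RR'' \cong \OO_{C_3\cup 2C_4\cup C_5}(g,h,i) \oplus \OO_{C_3}(g-1)^{\oplus 2}$ for some integers $g,h,i$. Here I further decompose the line bundle $\TT := \OO_{C_3\cup 2C_4\cup C_5}(g,h,i)$ via its own perfect factorization
$$(\OO_{C_4}(h+2),\; \OO_{C_3\cup C_4\cup C_5}(g,h,i))$$
arising from the inclusion of the reduced scheme $C_3\cup C_4\cup C_5\hookrightarrow C_3\cup 2C_4\cup C_5$, combine it with the direct sum structure to obtain a long factorization of $\RR''$, and concatenate with the factorization of $\RR'$ furnished by Proposition~\ref{prop 12}. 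I then apply Lemma~\ref{lem change} and Lemma~\ref{lem GSH}, with the spherical object $\SSS = \OO_{C_3}(g-1)$, to reorder factors and to absorb each of the two copies of $\SSS$ into a non-trivial extension with an adjacent piece; after sufficiently many such moves the terminal factor becomes a line bundle on a sub-chain that crosses $C_3$, possibly the entire chain $C_1\cup\cdots\cup C_5$.

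The principal difficulty lies in this exceptional alternative. A naive concatenation is not perfect, because $\Hom(\SSS,\OO_{C_3\cup C_4\cup C_5}(g,h,i))\neq 0$ in general, so the spherical-twist style rearrangements of Lemma~\ref{lem GSH} are essential; which sub-chain ultimately carries $\LL$ depends on which of the three alternatives of Proposition~\ref{prop 12} describes $\RR'$, and the required bookkeeping mirrors, and is slightly more involved than, the case analysis in the proof of Proposition~\ref{prop 123}.
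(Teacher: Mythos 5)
Your first alternative is fine and coincides with one branch of the paper's own argument: when the restriction $\RR''$ to $C_3\cup C_4\cup C_5$ falls into Corollary \ref{cor 123}(1), pulling the factorization back through $0\to\RR'\to\RR\to\RR''\to 0$ and using the support and duality vanishings gives the claim. The gap is in the exceptional alternative, and it is not just bookkeeping. First, the endpoint you announce --- ``a line bundle on a sub-chain that crosses $C_3$'' --- is not the statement to be proved: a subchain such as $C_2\cup C_3\cup C_4$ crosses $C_3$ but is neither of the form $C_i\cup\cdots\cup C_3$ nor the full chain, so even if your moves terminated there you would not have the corollary. Second, you never use the rigidity of $\RR$ to constrain how $\RR'$ glues to $\RR''$: you propose to run Lemma \ref{lem GSH} on a concatenation of an arbitrary Proposition \ref{prop 12} factorization of $\RR'$ with the split form of $\RR''$, but the hypotheses of Lemma \ref{lem GSH} (namely $h^0=0$ and $\chi=-1$ between $\SSS=\OO_{C_3}(g-1)$ and the adjacent factor) depend on the unknown degrees of $\RR'$ and fail for some combinations; those combinations may simply not occur for a rigid $\RR$, but your argument has no mechanism either to realize or to exclude them.

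The paper avoids all of this by restricting to \emph{both} three-curve subchains, $C_1\cup C_2\cup C_3$ and $C_3\cup C_4\cup C_5$. If either quotient satisfies Corollary \ref{cor 123}(1) one is done at once; otherwise both are of the split form $\OO_{\ldots}\oplus\OO_{C_3}(-1)^{\oplus 2}$, and since they have the same restriction to $C_3$ this pins $\RR$ down completely: the kernel of $\RR\to\RR_3$ splits as $\OO_{C_1\cup 2C_2}(0,-1)\oplus\OO_{2C_4\cup C_5}(-1,0)$, and the resulting explicit perfect factorizations can be rearranged to end in $\OO_{C_1\cup\cdots\cup C_5}$. In particular, in the genuinely hard case $\RR'$ is forced to be $\OO_{C_1\cup 2C_2}(0,-1)$ up to twist, not an arbitrary sheaf from Proposition \ref{prop 12}. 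You need either to import this second restriction or to supply the missing rigidity argument that rules out the bad gluings; as written the exceptional case is unproved. (A minor further point: the kernel of $\OO_{C_3\cup 2C_4\cup C_5}(g,h,i)\to\OO_{C_3\cup C_4\cup C_5}(g,h,i)$ is $\OO_{C_4}(h)$ rather than $\OO_{C_4}(h+2)$, since the two intersection points with $C_3$ and $C_5$ each lower the degree of the conormal twist by one.)
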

\begin{proof}
Taking the restriction to $C_1\cup C_2\cup C_3$ and  $C_3\cup C_4\cup C_5$,  we have exact sequences
\begin{align*}
0\to \RR_{45}\to \RR \to \RR_{123}\to 0,\\
0\to \RR_{12}\to \RR \to \RR_{345}\to 0.
\end{align*}
Note that $c_1(\RR_{123})=C_1+2C_2+3C_3$ and $c_1(\RR_{345})=3C_3+2C_4+C_5$.
If one of  $\RR_{123}$ and  $\RR_{345}$ satisfies Corollary \ref{cor 123}(1), then we can get the desired perfect factorization.

Suppose that both $\RR_{123}$ and  $\RR_{345}$ satisfy Corollary \ref{cor 123}(2), note that their restriction on $C_3$ are the same, for simplicity and without loss of generality, we may write
\begin{align*}
\RR_{123}{}&\cong \OO_{C_1\cup 2C_2 \cup C_3}\oplus \OO_{C_3}(-1)^{\oplus 2},\\
\RR_{345}{}&\cong \OO_{C_3\cup 2C_4 \cup C_5}\oplus \OO_{C_3}(-1)^{\oplus 2}.
\end{align*}
In this case we have an exact sequence 
$$
0\to \OO_{C_1\cup 2C_2}(0, -1) \oplus\OO_{2C_4\cup C_5}(-1, 0)\to \RR\to\OO_{C_3}\oplus \OO_{C_3}(-1)^{\oplus 2}\to 0.
$$
This gives perfect factorizations 
\begin{align*}
\RR\equiv{}& (\OO_{C_1\cup 2C_2}(0, -1) \oplus\OO_{2C_4\cup C_5}(-1, 0), \OO_{C_3}, \OO_{C_3}(-1)^{\oplus 2})\\
\equiv{}& (\OO_{C_2} \oplus\OO_{C_4}, \OO_{C_1\cup C_2}(0, -1) \oplus\OO_{C_4\cup C_5}(-1, 0), \OO_{C_3}, \OO_{C_3}(-1)^{\oplus 2})\\
\equiv{}& (\OO_{C_2} \oplus\OO_{C_4}, \OO_{C_1\cup C_2}(0, -1), \OO_{C_4\cup C_5}(-1, 0), \OO_{C_3}, \OO_{C_3}(-1)^{\oplus 2})\\
\equiv{}& (\OO_{C_2} \oplus\OO_{C_4}, \OO_{C_1\cup C_2}(0, -1), \OO_{C_3\cup C_4\cup C_5}, \OO_{C_3}(-1)^{\oplus 2})\\
\equiv{}& (\OO_{C_2} \oplus\OO_{C_4}, \OO_{C_1\cup C_2\cup C_3\cup C_4\cup C_5}, \OO_{C_3}(-1)^{\oplus 2}).
\end{align*}
Here we apply Lemma \ref{lem GSH} in the last two steps. Note that 
$$
h^1( \OO_{C_3}(-1), \OO_{C_1\cup C_2\cup C_3\cup C_4\cup C_5})=0
$$
by computing Hom's and $\chi$.
Hence we exchange the last two factors and get a perfect factorization
$$
\RR\equiv (\OO_{C_2} \oplus\OO_{C_4}, \OO_{C_3}(-1)^{\oplus 2},  \OO_{C_1\cup C_2\cup C_3\cup C_4\cup C_5}),
$$
and the proof is completed.
\end{proof}

\begin{cor}\label{cor 123321}
Let $C_1\cup \cdots \cup C_6$ be a chain of six $(-2)$-curves and $\RR$ a torsion rigid sheaf with $c_1(\RR)=C_1+2C_2+3C_3+3C_4+2C_5+C_6$. Then $\RR$ has a perfect factorization $(\GG, \LL)$
where $\LL$ is a line bundle supported on one of  the following chains:
\begin{enumerate} 
\item $C_i\cup \cdots \cup C_3$ for some $1\leq i \leq 3$;
\item  $C_2\cup \cdots \cup C_j$ for some $4\leq j \leq 6$;
\item $C_3\cup C_4$.
\end{enumerate}
\end{cor}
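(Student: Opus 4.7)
The approach follows the pattern established by Corollary \ref{cor 12321}. I would restrict $\RR$ to the two complementary subchains $C_1 \cup C_2 \cup C_3$ and $C_4 \cup C_5 \cup C_6$, obtaining short exact sequences
\begin{align*}
0 \to \RR_{456} \to \RR \to \RR_{123} \to 0,\\
0 \to \RR_{123} \to \RR \to \RR_{456} \to 0,
\end{align*}
where $c_1(\RR_{123}) = C_1 + 2C_2 + 3C_3$ and $c_1(\RR_{456}) = 3C_4 + 2C_5 + C_6$. By Remark \ref{remark restriction}, both $\RR_{123}$ and $\RR_{456}$ are torsion rigid, so each is described by Corollary \ref{cor 123} (applied directly to $\RR_{123}$, and to the reversed labeling for $\RR_{456}$).

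If either restriction satisfies Corollary \ref{cor 123}(1), giving a perfect factorization $(\GG', \LL')$ with $\LL'$ a line bundle on a subchain through $C_3$ (respectively $C_4$), I would lift this to a three-term factorization of $\RR$ via the appropriate exact sequence and then merge the first two terms into a single $\GG$. The perfectness $h^0(\GG, \LL') = 0 = h^2(\LL', \GG)$ is checked via long exact sequences, using that the complementary restriction and $\LL'$ have disjoint $1$-dimensional supports except at one node; these are routine computations in the spirit of the proof of Corollary \ref{cor 12321}. Depending on which side is used and after possibly one extra application of Lemma \ref{lem GSH} absorbing an adjacent piece from the other half into $\LL'$, the resulting line bundle lies on one of the chains listed in (1) or (2) of the statement.

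The main case, and the main difficulty, is when both $\RR_{123}$ and $\RR_{456}$ satisfy Corollary \ref{cor 123}(2), so that we have explicit decompositions
\begin{align*}
\RR_{123} &\cong \OO_{C_1 \cup 2C_2 \cup C_3}(a_1, a_2, a_3) \oplus \OO_{C_3}(a_3 - 1)^{\oplus 2},\\
\RR_{456} &\cong \OO_{C_4 \cup 2C_5 \cup C_6}(a_4, a_5, a_6) \oplus \OO_{C_4}(a_4 - 1)^{\oplus 2}.
\end{align*}
Mirroring the endgame of Corollary \ref{cor 12321}, I would further restrict $\RR$ to the middle $2$-chain $C_3 \cup C_4$; the kernel splits as a direct sum of torsion rigid sheaves on the disjoint $2$-chains $C_1 \cup C_2$ and $C_5 \cup C_6$, each of which is classified by Proposition \ref{prop 12}. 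After normalizing twists and iteratively applying Lemmas \ref{lem change} and \ref{lem GSH} with spherical sheaves $\SSS = \OO_{C_3}(a_3 - 1)$ and $\SSS = \OO_{C_4}(a_4 - 1)$ to reorder factors and fuse the two spherical summands across the node $C_3 \cap C_4$ with the line-bundle parts, the spherical pairs should merge into a single line bundle $\OO_{C_3 \cup C_4}(b_3, b_4)$, which serves as the final factor of the desired perfect factorization, producing case (3).

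The hardest part is the bookkeeping in this last case: at each application of Lemma \ref{lem GSH} the required values of $\Hom$, $\chi$, and $\Ext^1$ must be verified for the specific twists at that stage, and rigidity of $\RR$ has to be invoked to force the key extensions across $C_3 \cap C_4$ to be non-split, so that the final fusion produces precisely a line bundle on $C_3 \cup C_4$ rather than a more exotic rigid sheaf of the same $c_1$.
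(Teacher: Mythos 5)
Your overall skeleton matches the paper's: restrict to the two end $3$-chains, invoke Corollary \ref{cor 123} on each restriction, and use Lemma \ref{lem GSH} to fuse factors across nodes. The first branch (some restriction in case Corollary \ref{cor 123}(1)) is essentially right, with one caveat you only half-acknowledge: a line bundle on $C_4\cup\cdots\cup C_j$ is \emph{not} among the allowed supports in the statement, so when the $C_4C_5C_6$-side is in case (1) the fusion with a piece supported on $C_2\cup C_3$ is mandatory, not a ``possibly one extra application.'' To know that such a piece ($\OO_{C_2\cup C_3}$ up to twist) is available you must first have disposed of the case where $\RR_{123}$ satisfies Corollary \ref{cor 123}(1), i.e. the case analysis has to be ordered rather than symmetric: only after $\RR_{123}$ is pinned down as $\OO_{C_1\cup 2C_2\cup C_3}\oplus\OO_{C_3}(-1)^{\oplus 2}$ do you get the explicit factors $\OO_{C_1\cup C_2}(-1,1),\OO_{C_2\cup C_3},\OO_{C_3}(-1)^{\oplus 2}$ needed for the exchanges and the fusion onto $C_2\cup\cdots\cup C_j$.

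The genuine gap is in your main case. Restricting to the middle chain $C_3\cup C_4$ produces a quotient with $c_1=3C_3+3C_4$, which is classified nowhere in the paper (Proposition \ref{prop 12} only treats $c_1=C_1+2C_2$), so the plan as stated rests on an unestablished classification; and the extra restriction buys you nothing, since the two direct-sum decompositions of $\RR_{123}$ and $\RR_{456}$ already determine a five-term perfect factorization
$(\OO_{C_4\cup 2C_5\cup C_6},\OO_{C_4}(-1)^{\oplus 2},\OO_{C_1\cup C_2}(-1,1),\OO_{C_2\cup C_3},\OO_{C_3}(-1)^{\oplus 2})$
of $\RR$, which is what the paper manipulates directly. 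Moreover ``the spherical pairs merge into $\OO_{C_3\cup C_4}(b_3,b_4)$'' is not what happens: only one copy of $\OO_{C_4}(-1)$ fuses with one copy of $\OO_{C_3}(-1)$; the other $\OO_{C_4}(-1)$ must first be absorbed into $\OO_{C_2\cup C_3}$ via Lemma \ref{lem GSH}(2) (producing $\OO_{C_2\cup C_3\cup C_4}$), and one $\OO_{C_3}(-1)$ survives as a middle factor. Arranging that the line bundle on $C_3\cup C_4$ ends up as the \emph{last} factor of a perfect factorization requires this specific order of operations together with the vanishing checks $h^1(\OO_{C_3}(-1),-)=0$ that license the exchanges; ``iteratively applying Lemmas \ref{lem change} and \ref{lem GSH}'' does not by itself guarantee you terminate with an admissible last factor rather than a leftover $\OO_{C_3}(-1)$.
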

\begin{proof}
Taking the restriction to $C_1\cup C_2\cup C_3$, we have an exact sequence
\begin{align*}
0\to \RR_{456}\to \RR \to \RR_{123}\to 0.
\end{align*}
Note that $c_1(\RR_{123})=C_1+2C_2+3C_3$.
If $\RR_{123}$ satisfies Corollary \ref{cor 123}(1), then we  get the first case.

Now suppose that $\RR_{123}$ satisfies Corollary \ref{cor 123}(2). For simplicity and without loss of generality, we may assume that
$$
\RR_{123}\cong \OO_{C_1\cup 2C_2 \cup C_3}\oplus \OO_{C_3}(-1)^{\oplus 2},
$$
and we have a perfect factorization
$$
\RR\equiv (\RR_{456}, \OO_{C_1\cup C_2}(-1, 1),\OO_{C_2\cup C_3}, \OO_{C_3}(-1)^{\oplus 2}).
$$

On the other hand, $c_1(\RR_{456})=3C_4+2C_5+C_6$.

Suppose that $\RR_{456}$ has a perfect factorization $(\GG', \LL')$
where $\LL'$ is a line bundle supported on the chain $C_4\cup \cdots \cup C_j$ for some $4\leq j \leq 6$. For simplicity and without loss of generality, we may assume that $\LL'\cong \OO_{C_4\cup \cdots \cup C_j}$. Hence  we have  perfect factorizations
\begin{align*}
\RR{}& \equiv (\GG', \OO_{C_4\cup \cdots \cup C_j}, \OO_{C_1\cup C_2}(-1, 1),\OO_{C_2\cup C_3}, \OO_{C_3}(-1)^{\oplus 2})\\
{}& \equiv (\GG',  \OO_{C_1\cup C_2}(-1, 1),\OO_{C_4\cup \cdots \cup C_j},\OO_{C_2\cup C_3}, \OO_{C_3}(-1)^{\oplus 2})\\
{}& \equiv (\GG',  \OO_{C_1\cup C_2}(-1, 1),\OO_{C_2\cup C_3\cup C_4\cup \cdots \cup C_j}(0,0,1,\ldots), \OO_{C_3}(-1)^{\oplus 2})\\
{}& \equiv (\GG',  \OO_{C_1\cup C_2}(-1, 1),\OO_{C_3}(-1)^{\oplus 2}, \OO_{C_2\cup C_3\cup C_4\cup \cdots \cup C_j}(0,0,1,\ldots)).
\end{align*}
We apply Lemma \ref{lem GSH} in the second step, and the last step is because
$$
h^1(\OO_{C_3}(-1), \OO_{C_2\cup C_3\cup C_4\cup \cdots \cup C_j}(0,0,1,\ldots))=0
$$
by computing Hom's and $\chi$. This gives the second case of this corollary.

Now suppose that $\RR_{456}$ satisfies Corollary \ref{cor 123}.  For simplicity and without loss of generality, we may assume that
$$
\RR_{456}\cong \OO_{C_4\cup 2C_5 \cup C_6}\oplus \OO_{C_4}(-1)^{\oplus 2}.
$$
Then we have  perfect factorizations
\begin{align*}
\RR{}& \equiv (\OO_{C_4\cup 2C_5 \cup C_6}, \OO_{C_4}(-1)^{\oplus 2}, \OO_{C_1\cup C_2}(-1, 1),\OO_{C_2\cup C_3}, \OO_{C_3}(-1)^{\oplus 2})\\
{}& \equiv (\OO_{C_4\cup 2C_5 \cup C_6}, \OO_{C_1\cup C_2}(-1, 1),\OO_{C_4}(-1)^{\oplus 2}, \OO_{C_2\cup C_3}, \OO_{C_3}(-1)^{\oplus 2})\\
{}& \equiv (\OO_{C_4\cup 2C_5 \cup C_6}, \OO_{C_1\cup C_2}(-1, 1),  \OO_{C_2\cup C_3\cup{C_4}},\OO_{C_4}(-1), \OO_{C_3}(-1)^{\oplus 2})\\
{}& \equiv (\OO_{C_4\cup 2C_5 \cup C_6}, \OO_{C_1\cup C_2}(-1, 1),  \OO_{C_2\cup C_3\cup{C_4}},\OO_{C_3}(-1), \OO_{C_3\cup C_4}(-1, 0)).
\end{align*}
Here we apply Lemma \ref{lem GSH} in the last two steps. This gives the third case of this corollary.
\end{proof}

\section{Classification of torsion exceptional sheaves}
\noindent In this section, we prove Theorems \ref{main1} and \ref{main2}.

\begin{lem}\label{lemma main2}
Let $\EE$ be a torsion exceptional sheaf on a smooth projective surface $X$ satisfying conditions in Theorem \ref{main2}. Assume that there exists at least one $(-2)$-curve in  $\supp(\EE)$. Then there exists a chain of $(-2)$-curves $Z$ in $\supp(\EE)$, and a line bundle $\LL$ on $Z$, such that $c_1(\EE)\cdot c_1(\LL)=-1$ and there is an exact sequence
$$
0\to \EE'\to \EE\to \LL\to 0
$$
with $h^0(\EE', \LL)=0$.
\end{lem}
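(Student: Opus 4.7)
The plan is to produce $\LL$ as the final factor in a perfect factorization of the restriction of $\EE$ to a suitable chain of $(-2)$-curves in $\supp(\EE)$, after first extracting strong constraints on $c_1(\EE)$ from exceptionality.

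First, since $\EE$ is exceptional we have $\chi(\EE,\EE)=1$, so by the Riemann--Roch formula for torsion sheaves (Section \ref{subsection RR}) we obtain $c_1(\EE)^2=-1$. By condition~(2) the coefficient of $D$ in $c_1(\EE)$ is $1$, so write $c_1(\EE)=D+\sum_{k,i} r_{k,i}C_{k,i}$, where the $Z_k=\bigcup_i C_{k,i}$ are the disjoint $A_{n_k}$-chains in $\supp(\EE)$. Expanding $c_1(\EE)^2=-1$ yields
\[
\sum_{k:\,D\cdot Z_k=1} r_{k,i_k} \;=\; \sum_k\Bigl(\sum_i r_{k,i}^2 - \sum_i r_{k,i}r_{k,i+1}\Bigr),
\]
where by condition~(4), $i_k$ is the unique index with $D\cdot C_{k,i_k}=1$ when $D\cdot Z_k=1$. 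Rewriting contributions with $\delta_k=1$ via the polynomial $f$ of Section \ref{polynomial}, and using the elementary bound $\sum_i r_i^2-\sum_i r_ir_{i+1}\geq 1$ for any positive sequence with endpoints $\geq 1$, I conclude (a) every chain $Z_k$ actually meets $D$ and (b) $f(\vec r_k;i_k)=0$. Proposition \ref{prop polynomial} then forces $\vec r_k$ to begin and end at $1$, weakly increase to a peak at $i_k$, and weakly decrease thereafter, with consecutive differences in $\{0,1\}$.

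Next I would fix any such chain $Z_k$ and take the restriction $0\to\EE'\to\EE\to\RR_k\to 0$ of $\EE$ to $Z_k$ (Definition \ref{def restriction}), giving a torsion rigid $\RR_k$ with $c_1(\RR_k)=\sum_i r_{k,i}C_{k,i}$. The peaked structure of $\vec r_k$, together with $n_k\leq 6$ from condition~(3), restricts $c_1(\RR_k)$ to finitely many possibilities; each is handled, possibly after further restriction to a sub-chain of $Z_k$, by the classification of Section \ref{section rigid -2} (Propositions \ref{prop 12}, \ref{prop 123} and Corollaries \ref{cor 123}, \ref{cor 12321}, \ref{cor 123321}). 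This yields a perfect factorization of (a restriction of) $\RR_k$ of the form $(\GG,\LL)$ with $\LL$ a line bundle on a sub-chain $Z\subset Z_k$. Assembling with $0\to\EE'\to\EE\to\RR_k\to 0$ produces a filtration $0\subset\EE'\subset\FF\subset\EE$ with $\EE/\FF\cong\LL$, giving the desired $0\to\FF\to\EE\to\LL\to 0$. The intersection number is then direct: writing $c_1(\LL)=C_a+\cdots+C_b$, the pairing $c_1(\RR_k)\cdot c_1(\LL)$ telescopes to $(r_{a-1}-r_a)+(r_{b+1}-r_b)$ with the convention $r_0=r_{n_k+1}=0$, and combined with $D\cdot c_1(\LL)\in\{0,1\}$ the sub-chain supplied by the classification is precisely one making $c_1(\EE)\cdot c_1(\LL)=-1$ (the whole-chain choice $Z=Z_k$ always works, as one immediately checks). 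For the vanishing $h^0(\FF,\LL)=0$, decompose along $0\to\EE'\to\FF\to\GG\to 0$: $h^0(\GG,\LL)=0$ by perfectness of the factorization, while $h^0(\EE',\LL)=0$ because $\supp(\EE')\cap\supp(\LL)$ is contained in the single point $D\cap Z_k$ (the chains in $\supp(\EE)$ being disjoint) and $\LL$ is pure of dimension one, hence admits no nonzero zero-dimensional subsheaves.

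The main obstacle lies in the middle step: while sequences such as $(1,2,3,3,2,1)$ and $(1,2,3,2,1)$ are handled directly by Corollaries \ref{cor 123321} and \ref{cor 12321}, other admissible peaked sequences on chains of length up to six (e.g.\ $(1,2,3,2,2,1)$, $(1,2,2,1)$, $(1,1,2,1)$) are not literally on the list and must be reduced to the classified cases by first restricting $\RR_k$ to an appropriately chosen shorter sub-chain of $Z_k$ before invoking Proposition \ref{prop 123} or Corollary \ref{cor 123}. Verifying in every such reduction that the produced $\LL$ simultaneously satisfies perfectness and the intersection equation $c_1(\EE)\cdot c_1(\LL)=-1$ (rather than landing on $0$ or $-2$) is where the bulk of the bookkeeping lives.
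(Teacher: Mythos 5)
Your overall strategy is the paper's: derive $c_1(\EE)^2=-1$ from Riemann--Roch, force $f(\vec r_k;i_k)=0$ for each chain via Proposition \ref{prop polynomial} (your derivation that every chain meets $D$ from positivity of $\sum r_i^2-\sum r_ir_{i+1}$ is a legitimate substitute for the paper's appeal to connectedness of $\supp(\EE)$), then extract $\LL$ as the last factor of a perfect factorization of the restriction to a sub-chain, with $h^0(\EE',\LL)=0$ coming from the support condition and perfectness. Your telescoping formula $c_1(\RR_k)\cdot c_1(\LL)=(r_{a-1}-r_a)+(r_{b+1}-r_b)$ is also correct.

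However, the step you yourself flag as ``the main obstacle'' is precisely where the content of the lemma lies, and leaving it as bookkeeping is a genuine gap: you have not exhibited, for a general admissible peaked sequence, a sub-chain and a factor $\LL$ that the classification of Section \ref{section rigid -2} actually delivers and that satisfies $c_1(\EE)\cdot c_1(\LL)=-1$. The classification does not hand you the whole-chain line bundle in general (e.g.\ Corollary \ref{cor 123321} never produces one), so ``the whole-chain choice always works'' is not an available fallback. The organizing idea you are missing is that one never needs to classify the full sequence $(r_1,\dots,r_n)$: after reversing the chain if necessary, only the \emph{initial segment} matters, because $\LL$ will be supported near the low end and the intersection number only sees the nearby coefficients. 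Proposition \ref{prop polynomial} then leaves exactly six cases for that initial segment: $k=n=1$; $r_1=r_2=1$ with $k\geq 2$; $(r_1,r_2,r_3)=(1,2,1)$ with $k=2$; $(r_1,r_2,r_3)=(1,2,2)$ with $k\geq 3$; and the two full sequences $(1,2,3,2,1)$ and $(1,2,3,3,2,1)$. In the first two cases one restricts to $C_1$ alone; in the next two one restricts to $C_1\cup C_2$ (with multiplicities $1,2$) and invokes only Proposition \ref{prop 12}; only the last two require Corollaries \ref{cor 12321} and \ref{cor 123321}. All of your problematic examples dissolve under this reduction: $(1,1,2,1)$ begins with $1,1$; $(1,2,2,1)$ and the reversal of $(1,2,3,2,2,1)$, namely $(1,2,2,3,2,1)$, begin with $1,2,2$ and have $k\geq 3$. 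Without this reduction (or an equivalent exhaustive treatment), the proof is not complete.
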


\begin{proof}
By assumption, 
we may write
$$\supp(\EE)=D\cup\bigcup_{j=1}^m\bigcup_{i=1}^{n_j}C^j_i, $$
where $C^j_1\cup \cdots \cup C^j_{n_j}$ is a chain of $(-2)$-curves for each $j$ and they are disjoint from each other. Since $\EE$ is exceptional, $\supp(\EE)$ is connected. Hence we assume that $D$ intersects with the chain $C^j_1\cup \cdots \cup C^j_{n_j}$ on the curve $C^j_{k_j}$ at one point for each $j$.
We may write
$$c_1(\EE)=D+\sum_{j=1}^m\sum_{i=1}^{n_j}r^j_i C^j_i, $$
in the sense of Remark \ref{remark restriction} since the first Chern class of the restriction to every chain of $(-2)$-curves is uniquely determined.
Since $\EE$ is exceptional, by Riemann--Roch formula, 
$c_1(\EE)^2=-\chi(\EE, \EE)=-1.$
On the other hand,
\begin{align*}
c_1(\EE)^2{}&=\Big(D+\sum_{j=1}^m\sum_{i=1}^{n_j}r^j_i C^j_i\Big)^2\\
{}&=D^2+2D\cdot \sum_{j=1}^m\sum_{i=1}^{n_j}r^j_i C^j_i+\Big(\sum_{j=1}^m\sum_{i=1}^{n_j}r^j_i C^j_i\Big)^2\\
{}&=D^2+2D\cdot \sum_{j=1}^m\sum_{i=1}^{n_j}r^j_i C^j_i+\sum_{j=1}^m\Big(\sum_{i=1}^{n_j}r^j_i C^j_i\Big)^2\\
{}&=-1+2\sum_{j=1}^mr^j_{k_j}+\sum_{j=1}^m\Big(-2\sum_{i=1}^{n_j}(r^j_i)^2+2\sum_{i=1}^{n_j-1}r^j_ir^j_{i+1}\Big).
\end{align*}
This implies that
$
\sum_{j=1}^m f(r^j_1, \ldots, r^j_{n_j}; k_j)=0,
$
where $f$ is the polynomial defined in Subsection \ref{polynomial}. By Proposition \ref{prop polynomial},  $f(r^j_1, \ldots, r^j_{n_j}; k_j)=0$ for each $j$ and $\{r^j_1, \ldots, r^j_{n_j}, k_j\}$ satisfies the conditions in Proposition \ref{prop polynomial}.

For convenience, we write $n_1=n$, $r_i^1=r_i$, $C^1_i=C_i$, $k_1=k$.
Note that $n\leq 6$ by assumption,  and hence $r_k\leq 3$.

Reversing the order of $\{C_i\}$ if necessary, by the conditions in Proposition \ref{prop polynomial}, we only have the following 6 cases:\begin{enumerate}
\item $k=n=1$, $r_1=1$;
\item $k\geq 2$ and $r_1=r_2=1$;
\item $k=2$ and $r_1=1, r_2=2, r_3=1$;
\item $k\geq 3$ and $r_1=1, r_2=r_3=2$;
\item $k=3$, $n=5$ and $(r_1, \ldots, r_5)=(1,2,3,2,1)$;
\item $k=4$, $n=6$ and $(r_1, \ldots, r_6)=(1,2,3,3,2,1)$.
\end{enumerate}

In Case (1) and (2), taking the restriction to $C_1$,
we have an exact sequence
$$
0\to \EE'\to \EE \to \RR_1\to 0.
$$
Then $c_1(\RR_1)=C_1$ and hence $\RR_1$ is a line bundle on $C_1$. Moreover, $h^0(\EE', \RR_1)=0$ by construction, and 
$$
c_1(\EE)\cdot c_1(\RR_1)=
\begin{cases}(C_1+D)\cdot C_1=-1 & \text{Case (1)};\\
(C_1+C_2)\cdot C_1=-1  & \text{Case (2)}.
\end{cases}
$$
Hence we may take $\LL=\RR_1$.

In Case (3) and (4), taking the restriction to $C_1\cup C_2$,
we have an exact sequence
$$
0\to \EE_{12}\to \EE \to \RR_{12}\to 0.
$$
Then $c_1(\RR_{12})=C_1+2C_2$. By Proposition \ref{prop 12}, 
there exists a line bundle $\LL$ supported on $C_2$ or the chain $C_1 \cup C_2$ with an exact sequence 
$$
0\to \GG\to \RR_{12}\to \LL\to 0
$$
such that $h^0(\GG, \LL)=0$. Consider the exact sequence
$$
0\to \EE'\to \EE\to \LL\to 0
$$ given by the surjection $\EE\to  \RR_{12}\to\LL$. Then $h^0(\EE', \LL)=0$ since $\EE'$ is an extension of $\GG$ by $\EE_{12}$ and $\supp(\EE_{12})$ does not contain $C_1$ or $C_2$. Note that $c_1(\LL)=C_2$ or $C_1+C_2$, we have 
$$
c_1(\EE)\cdot c_1(\LL)=
\begin{cases}(C_1+2C_2+C_3+D)\cdot c_1(\LL)=-1 & \text{Case (3)};\\
(C_1+2C_2+2C_3)\cdot c_1(\LL)=-1  & \text{Case (4)}.
\end{cases}
$$
This $\LL$ satisfies all conditions we require.

In Case (5), taking the restriction to $C_1\cup \cdots \cup C_5$,
we have an exact sequence
$$
0\to \EE_{1}\to \EE \to \RR\to 0.
$$
Then $c_1(\RR)=C_1+2C_2+3C_3+2C_4+C_5$. By Corollary \ref{cor 12321}, 
$\RR$ has a perfect factorization $(\GG, \LL)$
where $\LL$ is a line bundle supported on either the chain $C_i\cup \cdots \cup C_3$ for some $1\leq i \leq 5$, or  the chain $C_1\cup \cdots \cup C_5$.
This induces an exact sequence
$$
0\to \EE'\to \EE\to \LL\to 0
$$
where $\EE'$ is an extension of $\GG$ by $\EE_{1}$. In particular, we have $h^0(\EE', \LL)=0$. By construction, $c_1(\LL)=\sum_{j=i}^3C_j$ for some $1\leq i \leq 5$ or $\sum_{j=1}^5C_j$. Note that $D$ only intersects with $C_3$, it is easy to compute that 
$$
c_1(\EE)\cdot c_1(\LL)=(C_1+2C_2+3C_3+2C_4+C_5+D)\cdot c_1(\LL)=-1.
$$
This $\LL$ satisfies all conditions we require.

In Case (6), taking the restriction to $C_1\cup \cdots \cup C_6$,
we have an exact sequence
$$
0\to \EE_{1}\to \EE \to \RR\to 0.
$$
Then $c_1(\RR)=C_1+2C_2+3C_3+3C_4+2C_5+C_6$. By Corollary \ref{cor 123321}, 
$\RR$ has a perfect factorization $(\GG, \LL)$
where $\LL$ is a line bundle supported on the chain \item $C_i\cup \cdots \cup C_3$ for some $1\leq i \leq 3$, or the chain $C_2\cup \cdots \cup C_j$ for some $4\leq j \leq 6$, or the chain $C_3\cup C_4$.
This induces an exact sequence
$$
0\to \EE'\to \EE\to \LL\to 0
$$
where $\EE'$ is an extension of $\GG$ by $\EE_{1}$. In particular, we have $h^0(\EE', \LL)=0$. By construction, $c_1(\LL)=\sum_{l=i}^3C_l$ for some $1\leq i \leq 3$, or $\sum_{l=2}^jC_l$ for some $4\leq j \leq 6$, or $C_3+C_4$. Note that $D$ only intersects with $C_4$, it is easy to compute that 
$$
c_1(\EE)\cdot c_1(\LL)=(C_1+2C_2+3C_3+3C_4+2C_5+C_6+D)\cdot c_1(\LL)=-1.
$$
This $\LL$ satisfies all conditions we require.
\end{proof}

\begin{proof}[Proof of Theorem \ref{main2}]
As in the proof of Lemma \ref{lemma main2}, we may write
$$c_1(\EE)=D+\sum_{j=1}^m\sum_{i=1}^{n_j}r^j_i C^j_i, $$
where $C^j_1\cup \cdots \cup C^j_{n_j}$ is a chain of $(-2)$-curves for each $j$ and they are disjoint from each other.  

Assume that there exists at least one $(-2)$-curve in  $\supp(\EE)$, then by Lemma \ref{lemma main2} there exists a chain of $(-2)$-curves $Z$ in $\supp(\EE)$, and a line bundle $\LL$ on $Z$, such that $c_1(\EE)\cdot c_1(\LL)=-1$ and there is an exact sequence
$$
0\to \EE'\to \EE\to \LL\to 0
$$
with $h^0(\EE', \LL)=0$.
Note that $\LL$ is a spherical object, and 
$$
\chi(\LL, \EE')=\chi(\LL, \EE)-\chi(\LL, \LL)=-c_1(\EE)\cdot c_1(\LL)-2=-1.
$$
By Lemma \ref{lem EES}, $\EE'$ is exceptional and $\EE\cong T_{\LL}\EE'$. Moreover, by the proof of Lemma \ref{lemma main2},
$$c_1(\EE')=c_1(\EE)-c_1(\LL)=D+\sum_{j=1}^m\sum_{i=1}^{n_j}(r^j_i)' C^j_i,, $$
where $(r^j_i)'=\begin{cases}
r^j_i-1 & \text{ if }C^j_i\subset \supp(\LL);\\
   r^j_i    & \text{ otherwise}.
\end{cases}$

By induction on the number $\sum_{j=1}^m\sum_{i=1}^{n_j}r^j_i$, after finitely many steps, we may assume that $c_1(\EE)=D$. This implies that $\EE$ is a line bundle on $D$ and the proof is completed.
\end{proof}

\begin{lem}\label{lem D in E}
Let $\EE$ be a torsion exceptional sheaf on a weak del Pezzo surface $X$, then there exists exactly one $(-1)$-curve $D$ in $\supp(\EE)$, and the restriction of $\EE$ in $D$ is a line bundle.
\end{lem}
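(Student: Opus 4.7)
The plan is to exploit the numerical constraint $c_1(\EE)^2=-\chi(\EE,\EE)=-1$ together with iterated applications of Mukai's lemma (Lemma \ref{lem GEG}). First, by Lemma \ref{lem supp}, each irreducible component $C$ of $\supp(\EE)$ has $C^2<0$; the nef-ness of $-K_X$ on a weak del Pezzo surface together with adjunction forces $C\cong\bP^1$ with $C^2\in\{-1,-2\}$. Write $c_1(\EE)=\sum_i a_i D_i+\sum_j r_j C_j$ with $(-1)$-curves $D_i$ and $(-2)$-curves $C_j$ (all $a_i,r_j\geq 1$). A parity check gives $\sum_i a_i\geq 1$: an effective sum of $(-2)$-curves alone has even self-intersection, incompatible with $c_1(\EE)^2=-1$.

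Next, I would estimate $\sum_i a_i$ from above via the two-step filtration
\[
0 \to \EE^{(-2)} \to \EE \to \EE_Z \to 0, \qquad Z:=\bigcup_i D_i,
\]
where $\EE^{(-2)}$ is the subsheaf of $\EE$ with supports on $(-2)$-curves, followed by successive restriction of $\EE_Z$ to each $D_i$ (in the sense of Definition \ref{def restriction}) yielding subquotients $\EE_{D_i}$. At each stage pairwise supports intersect only in dimension zero while all sheaves are pure one-dimensional, so the Mukai hypotheses $h^0(\FF_2,\FF_1)=h^2(\FF_1,\FF_2)=0$ are met. On $D_i$, the Harder--Narasimhan filtration of the rigid sheaf $\EE_{D_i}$ has subquotients $\OO_{D_i}(b_\ell)^{r_\ell}$ with $\sum_\ell r_\ell=a_i$, and the needed vanishings come from degree comparisons on $\bP^1$ together with $\omega_X|_{D_i}=\OO_{D_i}(-1)$; iterating Lemma \ref{lem GEG}(2) gives
\[
h^0(\EE_{D_i},\EE_{D_i})=\frac{1}{2}\Big(a_i^2+\sum_\ell r_\ell^2\Big)\geq\frac{a_i(a_i+1)}{2}.
\]
For $\EE^{(-2)}$, rigidity combined with $\omega_X$-triviality on the $(-2)$-configuration yields $h^0(\EE^{(-2)},\EE^{(-2)})=-\sigma/2$ where $\sigma:=(\sum_j r_j C_j)^2$.

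Plugging these bounds into the iterated Mukai identity for $h^0(\EE,\EE)=1$ and using $c_1(\EE)^2=-1$ to eliminate $\sigma$ along with the cross-intersection terms $\sum_{i<k}a_ia_k(D_i\cdot D_k)$ and $\sum_i a_i\bigl(D_i\cdot\sum_j r_j C_j\bigr)$, everything cancels to leave $1\geq(\sum_i a_i+1)/2$, i.e.\ $\sum_i a_i\leq 1$. Combined with the lower bound, $\sum_i a_i=1$: a unique $(-1)$-curve $D$ lies in $\supp(\EE)$ with multiplicity one, and purity together with $c_1(\EE_D)=D$ forces $\EE_D$ to be a line bundle on $D$. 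The main obstacle is the Harder--Narasimhan analysis on a $(-1)$-curve (the analog of Example \ref{example HN filtration}) and verifying that the apparently messy cross-intersection contributions cancel exactly; once the bookkeeping is correct, the numerical punchline is short.
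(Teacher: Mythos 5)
Your argument is correct and the bookkeeping does close up, but it is a genuinely different route from the paper's. Concretely: with the perfect factorization $(\EE^{(-2)},\EE_{D_1},\dots,\EE_{D_s})$, refined on each $D_i$ by the Harder--Narasimhan filtration, iterating Lemma \ref{lem GEG}(2) gives $1=h^0(\EE,\EE)=\sum_k h^0(\GG_k,\GG_k)+\sum_{k<l}\chi(\GG_k,\GG_l)$; the cross terms equal $\tfrac12\bigl(1+\sigma-\sum_i a_i^2\bigr)$ by $c_1(\EE)^2=-1$, and together with $h^0(\EE^{(-2)},\EE^{(-2)})=-\sigma/2$ and $h^0(\EE_{D_i},\EE_{D_i})\geq a_i(a_i+1)/2$ everything indeed cancels to $1\geq\tfrac12\bigl(\sum_i a_i+1\bigr)$, whence $\sum_i a_i=1$, and purity plus $c_1(\EE_D)=D$ forces $\EE_D$ to be a line bundle. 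The paper instead takes a general $E\in|-K_X|$ missing $\supp(\EE)$ (possible by Lemma \ref{lem no base}), tensors $0\to\omega_X\to\OO_X\to\OO_E\to0$ with $\EE$, and applies $\Hom(\EE,-)$: exceptionality and Serre duality give $\Hom(\EE,\EE|_E)\cong\CC$, and since $E$ meets each $(-1)$-curve once and no $(-2)$-curve at all, this one dimension count yields uniqueness of $D$, multiplicity one, and the line-bundle property simultaneously --- much shorter. What your approach buys is independence from the anticanonical linear system (you use nefness of $-K_X$ only to classify the components), in the same numerical spirit as Lemma \ref{lemma main2} and Proposition \ref{prop polynomial}; the price is two auxiliary facts you should make explicit rather than gesture at: (i) the $(-1)$-curve analogue of Example \ref{example HN filtration}, which does hold by the same proof since $\Ext^1_X(\OO_D(a),\OO_D(a))=H^1(D,\OO_D)\oplus H^0(D,N_{D/X})=0$; and (ii) $\EE^{(-2)}\otimes\omega_X\cong\EE^{(-2)}$, needed to upgrade $h^0+h^2=-\sigma$ to $h^0=-\sigma/2$, which holds because the $(-2)$-configurations are contracted by the anticanonical morphism and $\omega_X$ is pulled back from the Du Val model. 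With those two points written out, your proof is complete.
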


\begin{proof}
Since $|-K_X|$ has no base component by Lemma \ref{lem no base}, choose a general element in $E\in |-K_X|$ which is not contained in $\supp(\EE)$. There is a short exact sequence
$$0\rightarrow \omega _X \rightarrow \OO_X\rightarrow \OO_E \rightarrow 0.$$
Tensoring with $\EE$, since $\EE$ is pure one-dimensional and $E\not \subset\supp(\EE)$, we get an exact sequence
\begin{align*} 
0\rightarrow \EE\otimes \omega_X \rightarrow \EE \rightarrow \EE|_E\rightarrow 0.
\end{align*}
Applying $\Hom(\EE,-)$ to this sequence, we get an exact sequence 
$$
\Hom(\EE,\EE\otimes \omega_X)\rightarrow \Hom(\EE,\EE)\rightarrow \Hom(\EE,\EE|_E)
\rightarrow \Ext^1(\EE,\EE\otimes \omega_X).
$$
Since $\EE$ is exceptional,  $h^0(\EE,\EE)=1$ and $h^0(\EE,\EE\otimes \omega_X)=h^1(\EE,\EE\otimes \omega_X)=0$ by Serre duality. Hence
$
\Hom(\EE,\EE|_E)\cong\mathbb{C}.
$
By Lemma \ref{lem supp}, $\supp(\EE)$ only contains $(-1)$-curves and $(-2)$-curves.
Note that each $(-1)$-curve intersects with $E$ at one point and each $(-2)$-curve does not intersect with $E$,
we conclude that there is only one $(-1)$-curve $D$ in $\supp(\EE)$. 
Moreover, taking restriction to $D$, we get an exact sequence
$$
0\to \EE'\to \EE\to\EE_D\to 0,
$$
where the support of $\EE'$ only contains $(-2)$-curves.
Combining with the fact that $
\Hom(\EE,\EE|_E)\cong\mathbb{C}
$, we have  
$\Hom(\EE_D,\EE_D|_E)\cong\mathbb{C}.$
Since $\EE_D$ is pure one-dimensional, $\EE_D$ is a line bundle on $D$.
\end{proof}

\begin{proof}[Proof of Theorem \ref{main1}]
It suffices to check that any torsion exceptional sheaf $\EE$ on a weak del Pezzo surface $X$ of $d>2$ of Type A satisfies conditions (1)-(4) in Theorem \ref{main2}.

By  Lemma \ref{lem supp}, any irreducible component of $\supp(\EE)$ is a curve with negative self-intersection, hence is a $(-1)$-curve or $(-2)$-curve. By Lemma \ref{lem D in E}, conditions (1)--(2) are satisfied.
By the assumption $d>2$ and Lemma \ref{lem no of -2}, there are at most $6$ $(-2)$-curves on $X$, hence condition (3) is satisfied since $X$ is of Type A. Again by the assumption $d>2$ and Lemma \ref{lem d>1}, condition (4) is satisfied.
\end{proof}

\section*{Acknowledgment}
\noindent The first author would like to express his deep gratitude to his supervisor Professor Yujiro Kawamata for discussion and warm encouragement. The authors are grateful to the anonymous refree(s) for valuable comments and suggestions.


\begin{thebibliography}{99}

\bibitem{C} P. Cao, \emph{Semi-orthogonal decomposition problems in derived categories}, Master thesis, University of Tokyo, 2015.

\bibitem{Dolgachev}
I. V. Dolgachev, \emph{Classical Algebraic Geometry: a modern
view}, Cambridge University Press, 2012.


\bibitem{H} R. Hartshorne, \emph{Algebraic geometry}, Graduate Texts in Mathematics, No. 52, Springer-Verlag, New York, 1977.


\bibitem{Huybrechts}
D.~Huybrechts, \emph{Fourier-{M}ukai transforms in algebraic geometry}, Oxford
  Mathematical Monographs, The Clarendon Press Oxford University Press, Oxford,
  2006. 



\bibitem{Ishii-Uehara}
A. Ishii, H. Uehara, \emph{Autoequivalences of derived categories on
  the minimal resolutions of {$A\sb n$}-singularities on surfaces}, J.
  Differential Geom. \textbf{71} (2005), no.~3, 385--435. 


\bibitem{Kosta}
D. Kosta, \emph{Del Pezzo surfaces with Du Val singularities}, Ph.D. Thesis, University of Edinburgh
2009.


\bibitem{Kuleshov}
S. A. Kuleshov, \emph{Exceptional and rigid sheaves on surfaces with
  anticanonical class without base components}, J. Math. Sci. (New York)
  \textbf{86} (1997), no.~5, 2951--3003. 
  
\bibitem{KO}
S. A. Kuleshov, D. O. Orlov, \emph{Exceptional sheaves on del {P}ezzo
  surfaces}, Izv. Akad. Nauk, Russia, Ser. Mat. \textbf{58} (1994), no.~3, 53--87.

\bibitem{OU} S. Okawa, H. Uehara, \emph{Exceptional sheaves on the Hirzebruch surface $\mathbb{F}_2$}, Int. Math. Res. Notices  {\bf 2015} (2015), 12781--12803.
\bibitem{ST}
P. Seidel, R. Thomas, \emph{Braid group actions on derived categories
  of coherent sheaves}, Duke Math. J. \textbf{108} (2001), no.~1, 37--108.
\end{thebibliography}
\end{document}